\DeclareMathAlphabet{\pazocal}{OMS}{zplm}{m}{n}
\newtheorem{theorem}{Theorem}[subsection]
\newtheorem{corollary}[theorem]{Corollary}
\newtheorem{lemma}[theorem]{Lemma}
\newtheorem{proposition}[theorem]{Proposition}
\theoremstyle{definition}
\newtheorem{definition}[theorem]{Definition}
\theoremstyle{emp}
\theoremstyle{question}
\theoremstyle{conj}
\theoremstyle{notat}
\theoremstyle{claim}
\theoremstyle{construction}
\theoremstyle{remark}
\newtheorem{remark}[theorem]{Remark}
\theoremstyle{example}
\numberwithin{equation}{subsection}
\title{A note on the algebra of p-adic multi-zeta values }
\author{S\.{i}nan \"{U}nver}
\address{Ko\c{c} University, Mathematics Department. Rumelifeneri Yolu, 34450, Istanbul, Turkey}
\email{sunver@ku.edu.tr}
\begin{document}
\maketitle
\noindent

\begin{abstract} 
We prove that the algebra of p-adic multi-zeta values, as defined in \cite{U1} or \cite{fu}, are contained in another algebra which is defined explicitly in terms of  series. The main idea  is to truncate certain series, expand them in terms of series all of which are divergent except one, and then take the limit of the convergent one. The main result is Theorem 3.0.27. 
\end{abstract}

\section{Introduction} 

Multi-zeta values were defined by Euler as the sum of the series: 
$$
\zeta(s_{1},s_{2}, \cdots,s_{k}):=\sum _{0<n_{1}<\cdots <n_{k}}\frac{1}{n_{1} ^{s_{1}} n_{2} ^{s_{2}} \cdots n_{k} ^{s_{k}}},
$$
for $s_{1}, \cdots, s_{k-1} \geq 1$ and $s_{k}>1.$  The Euler-Kontsevich formula \cite{go} expresses  these numbers in terms of iterated integrals on the thrice punctured line $X:=\mathbb{P}^{1} \setminus \{0, 1, \infty \}  .$ This formula interprets multi-zeta values as real periods of the mixed Tate motive coming from the unipotent fundamental group of $X$ \cite{de}, \cite{go}. 
These imply many relations between the multi-zeta values and the algebra of these values has arithmetic significance as it relates to the tannakian fundamental group  of mixed Tate motives over $\mathbb{Z}$  \cite{go}.

The p-adic version of the these values were defined by Deligne (unpublished, explained in \cite{U1}) coming from the comparison theorem between the de Rham  and the crystalline fundamental group of $X.$ The double shuffle relations for these values were proved by Furusho and Jafari and the Drinfel'd-Ihara relations were proved by the author. The question of algebraic independence or even non-vanishing seems to be a more difficult question. We do not even know that $\zeta_{p}(2k+1)$ are non-zero for all primes $p,$ and positive integers $k.$ This suggests that in order to prove linear independence among these numbers one might need a somewhat more explicit description of these numbers or the algebra generated by these numbers. This paper is a first attempt in this direction. Our aim in this note is to prove that the algebra of p-adic multi-zeta values lie in another algebra that is described only using certain series. The remaining, but more difficult question is to study these series.

It turns out that the individual truncated series that appear in the expression for p-adic multi-zeta values are divergent, but their certain linear combinations converge. The main  idea is to consider these divergent series and express them as  linear combinations of a convergent series and  other divergent series and then take the convergent part. We call this process the {\it regularization} of the series. 

In \textsection{2}, we describe this process. The main point is that there are certain simple divergent series which we denote by $\sigma_{p}(\underline{s}),$ which are linearly independent  under the algebra of {\it power series functions}. That these series are linearly independent is proved in Proposition 2.0.3; the fact that all the series we are interested can be expressed in terms of these divergent series is proved in Proposition 2.0.7.  Next in Proposition 2.0.14, which forms the basis for the inductive arguments, we prove that, under  conditions that are satisfied by the series that appear below,  if the truncated series that are the coefficients of a power series are regular  then so are the coefficients of its antiderivative after it is multiplied by one of the forms $\omega_{i}.$ 

In \textsection 3, we apply the above results to the truncated series that appear in the expressions for p-adic multi-zeta values. The main idea is based on the results of \cite{U1}. Namely, using the standard lifting of frobenius on $X,$ which is a good choice outside a disc of radius 1 around 1, we obtain a differential equation (3.0.3) for $\mathfrak{g}.$ Next we use \cite[Proposition 2]{U1} to find an expression for the value of $\mathfrak{g}$ at infinity and the relation (3.0.2) to relate that value to $g,$ whose coefficients are the p-adic multi-zeta values.  The main  result is Theorem 3.0.27 where we prove that the algebra of p-adic multi-zeta values $\pazocal{Z}$ lie inside the  algebra $\pazocal{P}$ of values of regularized series. Finally, we note in Remark 3.0.28 that the above result also implies the same statement for the p-adic multiple zeta values as defined by Furusho \cite{fu}, since the algebra that they generate is the same as $\pazocal{Z}.$
 
{\it Acknowledgements.} The author  thanks T. Terasoma for a very short discussion which became the main inspiration for this note. 

\section{Iterated sums}
Fix a prime $p.$   For $\underline{s}:=(s_{1},\cdots,s_{k}),$ with $0 \leq s_{i},$ and $\underline{m}:=(m_{1},\cdots,m_{k}),$ with $0 \leq m_{i}<p,$ let  
$$
\sigma(\underline{s};\underline{m})(n):=\sum \frac{1}{n_{1} ^{s_{1}} \cdots n_{k} ^{s_{k}}},
$$ where the sum is over $0<n_{1}<n_{2}<\cdots < n_{k}<n$ with $p|(n_{i}-m_{i}).$ Similarly, we let $\gamma(\underline{s};\underline{m})(n):=n^{-s_{k}}\cdot\sigma(\underline{s}';\underline{m}')(n),$ if $p|(n-m_{k})$ and 0 otherwise, where $\underline{s}'=(s_{1},\cdots,s_{k-1})$ and $\underline{m}'=(m_{1},\cdots,m_{k-1}).$    Let $\sigma_{p}(\underline{s})(n):=\sigma(\underline{s};\underline{0})(n),$ where $\underline{0}=(0,\cdots,0).$ We define the {\it depth}  as  $d(\underline{s})=k$ and the {\it weight} as $w(\underline{s}):=\sum s_{i}.$

 Let us call a function $f: \mathbb{\mathbb{N}}_{\geq n} \to \mathbb{Q}_{p},$ for some $n,$ a 
{\it power series  function}, if there exist power series $p_{i}(x) \in \mathbb{Q}_{p}[[ x]],$ which converge on the closed unit disc $D(r_{i})$ around 0, for some $r_{i} > |p|,$ for $0 \leq i < p ,$ such that $f(a) = p_{i} (a-i)$ for all $a \geq n$ and $p |(a-i).$ Clearly there is a unique power series function $\overline{f}$ with domain $\mathbb{Z}_{\geq 0}$ such that $\overline{f}$ restricts to $f$ on $\mathbb{N}_{\geq n}.$ We let $f(0):=\overline{f}(0),$ or more explicitly $f(0)=\lim_{k \to 0 \atop {k \geq n}}f(k).$ We will identify two power series functions if they are the same on the intersection of their domains of definition.     Note that we have the following \cite[Proposition 5.0.5]{U2}:

\begin{proposition}\label{basic}
(i) The product and sum of power series functions are also power series functions. So is the function defined as $f(n)=n^{s},$ if $p \nmid n;$ and $f(n)=0,$ if $p|n,$ for   $s \in \mathbb{Z}.$  

(ii) If $f$ is a power series function, let us define $f^{[1]}$ and $f^{(1)}$   as $f^{[1]}(n)=f^{(1)}(n)= (f(n)-f(0))/n,$ if $p|n;$  $f^{[1]}(n)=0,$  $f^{(1)}(n)=f(n)/n,$   if $p \nmid n.$ Then both  $f^{[1]}$ and $f^{(1)}$ are  power series functions. 

(iii)
If $f: \mathbb{N}_{\geq n_{0}} \to \mathbb{Q}_{p}$ is a power series function and if  we define 
$$
F(n):=\sum_{n_{0} \leq k \leq n} f(k),
$$
then so is $F.$ 
\end{proposition}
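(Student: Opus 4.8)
The plan is to verify all three parts by working residue class by residue class modulo $p$, reducing each assertion to a manipulation of the power series $p_i(x) \in \mathbb{Q}_p[[x]]$ attached to $f$ on the class $i$, where $x = n - i$ runs over $p\mathbb{Z}_p$; throughout, $|\cdot|$ denotes the $p$-adic absolute value.

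Part (i) is formal. If $f, g$ are given on the class $i$ by series $p_i, q_i$ converging on closed discs of radii $r_i, r_i' > |p|$, then $f+g$ and $fg$ are given there by $p_i + q_i$ and $p_i q_i$, which converge on the disc of radius $\min(r_i, r_i') > |p|$. For $n \mapsto n^s$ the class $i = 0$ contributes the zero series, while for $1 \le i < p$ I would write $n = i + x$ with $|x| \le |p|$ and expand $n^s = i^s(1 + x/i)^s$ via the binomial series; since $|i| = 1$ and the $\binom{s}{j}$ are $p$-adic integers, this converges on every closed disc of radius strictly below $1$, in particular one of radius $> |p|$. Part (ii) is similar: on the class $i = 0$ one has $f(0) = p_0(0)$, so $(p_0(x) - p_0(0))/x$ is obtained by deleting the constant term and shifting indices, still convergent on $D(r_0)$, and this is the series for both $f^{[1]}$ and $f^{(1)}$; on a class $i \ne 0$, $f^{[1]}$ vanishes while $f^{(1)}(n) = p_i(x) \cdot (i + x)^{-1}$ is a product of power series convergent on a disc of radius $> |p|$ by part (i).

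The substance is (iii). After splitting $F$ into its contributions from each residue class $j$ and substituting $k = j + pt$, the task reduces to summing a single power series $g(t) = p_j(pt) = \sum_n b_n t^n$ over consecutive integers $t$, where, writing $p_j(x) = \sum_n c_n x^n$, one has $b_n = c_n p^n$; the hypothesis $r_j > |p|$ forces geometric decay $|b_n| \le C(|p|/r_j)^n$, so $g$ converges on a disc of radius $> 1$ in $t$. I would then invoke Faulhaber's formula, writing $\sum_{t \le T} t^n = S_n(T)$ as a polynomial in $T$ of degree $n+1$ with coefficient of $T^r$ equal to $\tfrac{1}{n+1}\binom{n+1}{r}B_{n+1-r}$, and interchange the two summations to obtain $\sum_{t \le T} g(t) = \sum_r A_r T^r$ with $A_r = \sum_{n \ge r - 1} b_n \tfrac{1}{n+1}\binom{n+1}{r}B_{n+1-r}$. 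Since for $n \equiv i$ the upper limit in the class $j$ is an affine function $T = (n-i)/p + c_{i,j}$ of $n$ with integer $c_{i,j}$, the final step is to substitute this and collect the result as a power series in $x = n - i$.

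The main obstacle is the $p$-adic convergence bookkeeping in (iii), with two delicate points. First, although the Bernoulli numbers have denominators, von Staudt--Clausen gives $|B_m| \le |p|^{-1}$; combined with the integrality of $\binom{n+1}{r}$ and the bound $|n+1|^{-1} \le n+1$, this yields $|A_r| \lesssim r\,(|p|/r_j)^r$, so each coefficient series converges. Second, and most delicately, substituting $T = x/p + c_{i,j}$ introduces a factor $p^{-l}$ into the coefficient of $x^l$; the decay $(|p|/r_j)^r$ of the $A_r$ exactly absorbs this blow-up, leaving the coefficient of $x^l$ of size $\lesssim l\,r_j^{-l}$, so the resulting series converges on every disc $D(r)$ with $|p| < r < r_j$. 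Exhibiting this cancellation cleanly --- rather than merely bounding the intermediate series in $T$ --- is where the hypothesis $r_j > |p|$ is indispensable and is the heart of the argument. Finally I would sum over the finitely many classes $j$ and absorb the lower summation limits and boundary corrections, which contribute only constants, to conclude that $F$ is a power series function.
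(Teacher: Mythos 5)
The paper offers no proof of this proposition: it is quoted from \cite[Proposition 5.0.5]{U2}, so there is nothing in-paper to compare your argument against, and it must stand on its own. It does. The class-by-class reductions in (i) and (ii) are the right formal manipulations (including the binomial series for $n^{s}$ with $s<0$ on the classes $i\neq 0$, and the shift-by-one of $p_{0}$ on the class $i=0$). For (iii), which is the only substantive part, your chain of estimates is sound: rescaling to $t=(k-j)/p$ turns $p_{j}$ into a series with coefficient decay $|b_{n}|\le C(|p|/r_{j})^{n}$, hence radius of convergence $r_{j}/|p|>1$ in $t$; the bounds $|B_{m}|\le |p|^{-1}$ (von Staudt--Clausen), $|\binom{n+1}{r}|\le 1$ and $|1/(n+1)|\le n+1$ make each $A_{r}$ converge with $|A_{r}|\lesssim r\,(|p|/r_{j})^{r}$; and this geometric decay exactly compensates the factor $p^{-l}$ introduced by substituting $T=x/p+c_{i,j}$, leaving a series in $x=n-i$ convergent on $D(r)$ for any $|p|<r<r_{j}$, which is what the definition demands. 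The double-series rearrangements are legitimate because in the non-archimedean setting the displayed term bounds give unconditional convergence. One simplification worth knowing: expanding $g$ in the Mahler basis and using $\sum_{0\le t\le T}\binom{t}{n}=\binom{T+1}{n+1}$ eliminates the Bernoulli numbers and makes the integrality of the summation operator transparent; equivalently, one can note that $F(n+p)-F(n)$ is a power series function by (i) and solve the resulting difference equation. But your Faulhaber route, with the cancellation exhibited as you do, is a complete proof.
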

  
The following lemma on power series will be essential while we are proving the linear independence of the $\sigma_{p}$'s.
 
\begin{lemma}
 Let $f,g \in \mathbb{Q}_{p}[[z]]$ be two power series which are  convergent on $D(a),$ for some $a>1.$ Suppose that $g \neq 0,$ and let $h:=f/g.$   If there exist  $C_{i} \in \mathbb{Q}_{p}$ and $n\geq 1$ such that $h(z+1)-h(z)=\frac{C_{n}}{z^{n}}+\cdots +\frac{C_{1}}{z},$ for infinitely many  $z \in D(a)$      then $h$ is constant and  $C_i=0,$ for all $i.$  
 \end{lemma}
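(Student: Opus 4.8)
The plan is to first upgrade the hypothesis from ``infinitely many points'' to a genuine identity of meromorphic functions, and then to exploit that $a>1$ keeps $\mathbb{Z}$ and all of its integer translates inside $D(a)$. Writing $h=f/g$ and clearing denominators, the asserted relation at a point $z$ becomes
$$
z^{n}\bigl(f(z+1)g(z)-f(z)g(z+1)\bigr)=g(z)g(z+1)\sum_{i=1}^{n}C_{i}z^{n-i}.
$$
Since $a>1$, the translation $z\mapsto z+1$ maps $D(a)$ into itself, so both sides are power series convergent on $D(a)$. As they agree at infinitely many points, Strassmann's theorem forces their difference, a convergent power series with infinitely many zeros, to vanish identically; hence $h(z+1)-h(z)=\sum_{i=1}^{n}C_{i}z^{-i}$ holds as an identity of meromorphic functions on $D(a)$.

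Next I would show $h$ has no poles on $D(a)$. Since $g\neq0$, Strassmann bounds its zeros, so the poles of $h$ form a finite set $S$ contained in the zero set of $g$. If $z_{0}\in S$ and $z_{0}\neq0$, then $\sum_{i}C_{i}z^{-i}$ is regular at $z_{0}$, so the identity $h(z+1)=h(z)+\sum_{i}C_{i}z^{-i}$ forces a pole at $z_{0}+1$, i.e. $z_{0}+1\in S$. Because the translates $z_{0}+k$ all lie in $D(a)$ (this is exactly where $a>1$ enters) and are distinct in characteristic zero, any $z_{0}\in S$ that is not a non-positive integer would generate infinitely many poles, contradicting the finiteness of $S$. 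Thus $S$ consists of non-positive integers; running the argument backwards, a pole at $u\neq1$ forces a pole at $u-1$, so applying this to the most negative element of $S$ produces a strictly smaller pole and contradicts minimality unless $S=\varnothing$. Hence $h$ is analytic on $D(a)$, the left-hand side $h(z+1)-h(z)$ is analytic, and comparing with $\sum_{i}C_{i}z^{-i}$, which has a pole at $0$ unless it is identically zero, gives $C_{i}=0$ for all $i$.

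Finally the identity reduces to $h(z+1)=h(z)$, so $h(m)=h(0)$ for every $m\in\mathbb{Z}$. The analytic function $h-h(0)$ then has the infinitely many zeros $\mathbb{Z}\subset D(a)$ (again using $a>1$), so by Strassmann it vanishes identically and $h$ is constant. I expect the heart of the argument to be the pole-propagation step: it is precisely the boundedness of $\mathbb{Z}$ in $\mathbb{Q}_{p}$ that allows a single pole to spawn an infinite orbit inside the fixed disc $D(a)$, which is what makes Strassmann's finiteness of zeros bite. The one technical point to verify carefully is that a meromorphic function on $D(a)$ with no poles is genuinely a convergent power series there, which I would justify via Weierstrass preparation in the Tate algebra.
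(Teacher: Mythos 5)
Your proof is correct, and its overall skeleton matches the paper's: use Weierstrass preparation (equivalently Strassmann) to upgrade the pointwise relation to an identity of meromorphic functions on $D(a)$, analyze the pole set of $h$ using the fact that $a>1$ keeps all integer translates inside $D(a)$, conclude $C_{i}=0$, and finish by noting that a $1$-periodic analytic function on $D(a)$ minus a value has infinitely many zeros and hence is constant. The one place where you genuinely diverge is the pole analysis. The paper argues by cardinality: writing $\mathfrak{P}(h)$ for the (finite) pole set, the identity gives $\mathfrak{P}(h)\,\triangle\,(\mathfrak{P}(h)-1)\subseteq\{0\}$ whenever some $C_{i}\neq 0$, while $\mathfrak{P}(h)\neq\emptyset$ and $\mathfrak{P}(h)\neq\mathfrak{P}(h)-1$ force this symmetric difference of two distinct equinumerous finite sets to have at least two elements --- a contradiction in one stroke, with no need to locate the poles. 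You instead propagate a single pole forward and backward along its integer orbit, pin the pole set down to non-positive integers, and kill it with a minimality argument. Both hinge on the same $p$-adic fact (the boundedness of $\mathbb{Z}$ makes the orbit stay in $D(a)$); the paper's counting is slicker and shorter, while your orbit argument is more explicit about where poles could live and perhaps easier to check line by line. Your concluding technical caveat (that a pole-free $f/g$ is an honest convergent power series on $D(a)$) is handled in the paper by the same Weierstrass factorization $f=p\cdot u$, $g=q\cdot v$ you would invoke, so nothing is missing.
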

 
\begin{proof}
By the Weierstrass preparation theorem, if the above equality holds for infinitely many $z \in D(a)$ then it holds for all $z \in D(a),$ except for the zeros of the denominators involved.   
Suppose that $f \neq 0,$ then again by the Weierstrass preparation theorem  
there are polynomials $p,q \in \mathbb{Q}_{p}[z],$ and  power series $u, v \in \mathbb{Q}_{p}[[ z]] ,$ which converge and are nonzero on $D(a)$ such that $f=p\cdot u$ and $g=q\cdot v.$ Therefore the poles  of $h(z)$ and $h(z+1)$  in $D(a),$ together with their multiplicities, are exactly those of $p(z)/q(z)$ and $p(z+1)/q(z+1).$  Let $\mathfrak{P}(k)$ denote the set of poles of $k$ in $D(a).$ 

Then 
$$
\mathfrak{P}(h) \triangle (\mathfrak{P} (h)-1) \subseteq \mathfrak{P} (\frac{C_{n}}{z^{n}}+\cdots + \frac{C_{1}}{z}) \subseteq \mathfrak{P} (h) \cup (\mathfrak{P} (h)-1),
$$
where $\Delta$ denotes the symmetric difference. 

If $C_{i} \neq 0$ for some $i,$ then $\mathfrak{P} (\frac{C_n}{z^{n}}+ \cdots +\frac{C_{1}}{z})=\{0\},$ hence $\mathfrak{P}(h)\neq \emptyset.$ This implies that $\mathfrak{P}(h) \neq \mathfrak{P}(h)-1.$ Since $|\mathfrak{P}(h)|=|\mathfrak{P}(h)-1|$ is finite,  this implies that the symmetric difference of $\mathfrak{P}(h)$ and $\mathfrak{P}(h)-1$ contains at least two elements. This is a contradiction. Hence $C_i=0,$ for all $i.$ This implies that $h(z+1)=h(z).$ Choosing an $\alpha$ where $h$ does not have a pole and replacing $h$ with $\overline{h}:=h-h(\alpha),$ we see that $\overline{h}$ has infinitely many zeros in $D(a)$ and hence is 0.  
 \end{proof}

 Let $\mathcal{P}$ denote the algebra of power series  in $\mathbb{Q}_{p}$ which converge on $D(r)$ for some $r>|p|.$ We will identify these power series with the functions that they define from $p\mathbb{N}$ to $\mathbb{Q}_{p}.$ Let $\sigma_p(\emptyset):=1. $ 
 Let $\mathcal{P}_{\sigma}$ denote the module over $\mathcal{P}$ generated by  $\sigma_{p}(\underline{s})$ with $\underline{s}\in \cup_{n} \mathbb{N}^{\times n}.$  Then by the shuffle product formula for series, $\mathcal{P}_{\sigma }$ is an algebra.


 \begin{proposition}
The algebra $\mathcal{P}_{\sigma}$ is free with basis $\{\sigma _{p} (\underline{s}) | \underline{s}\in \cup_{n} \mathbb{N}^{\times n} \}$ as a module over $\mathcal{P}.$ 
  \end{proposition}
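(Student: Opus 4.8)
The plan is to prove the equivalent statement that the functions $\sigma_p(\underline{s})$ are $\mathcal{P}$-linearly independent, which is exactly what freeness with the given spanning set means; and since $\mathcal{P}$ is an integral domain, it suffices to prove independence over its fraction field $K:=\mathrm{Frac}(\mathcal{P})$. The first step is a change of variable. Writing $n=pm$ and identifying each $q\in\mathcal{P}$ with the series $q(pm)$ in the variable $m$, a series converging on $D(r)$ with $r>|p|$ becomes a series in $m$ converging on $D(a)$ with $a=r/|p|>1$; thus $\mathcal{P}$ becomes the ring of power series in $m$ convergent on some $D(a)$, $a>1$, and $K$ is precisely the field of ratios $f/g$ appearing in the Lemma. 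Because $\mathbb{Q}_p$ is non-archimedean the shift $m\mapsto m+1$ preserves each disc $D(a)$ with $a>1$, so $K$ is stable under the difference operator $Dh(m):=h(m+1)-h(m)$; this makes $(K,D)$ a difference field into which all the $\sigma_p(\underline{s})$, viewed as germs at infinity of functions of the integer $m$, are embedded.

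The computational backbone is the identity $D\sigma_p(\underline{s})=p^{-s_k}m^{-s_k}\,\sigma_p(\underline{s}')$ for $\underline{s}=(s_1,\dots,s_k)$ and $\underline{s}'=(s_1,\dots,s_{k-1})$: passing from $pm$ to $p(m+1)$ adds exactly the terms with top index $n_k=pm$, contributing the factor $(pm)^{-s_k}=p^{-s_k}m^{-s_k}$ times the depth-$(k-1)$ sum. Hence $D$ lowers depth by one, but only up to the Leibniz correction $D(\phi\,\sigma_p(\underline{s}))=D\phi\cdot\sigma_p(\underline{s})+\phi^{+}\,D\sigma_p(\underline{s})$, where $\phi^{+}(m)=\phi(m+1)$. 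Letting $W_d$ be the $K$-span of all $\sigma_p(\underline{s})$ with $d(\underline{s})\le d$, this shows each $W_d$ is $D$-stable. I would then induct on $d$, the hypothesis $P(d-1)$ being that the $\sigma_p(\underline{s})$ of depth $\le d-1$ are $K$-linearly independent; by a standard filtration argument $P(d)$ follows once one shows that any $\Theta:=\sum_{d(\underline{s})=d}c_{\underline{s}}\sigma_p(\underline{s})\in W_{d-1}$ with $c_{\underline{s}}\in K$ forces all $c_{\underline{s}}=0$.

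To prove this last assertion I would take a putative counterexample $\Theta$ with the minimal number $N\ge 1$ of nonzero top coefficients. The crux, and the main obstacle, is that $D$ does not lower the depth of $\Theta$: one has $D\Theta=\sum_{d(\underline{s})=d}Dc_{\underline{s}}\,\sigma_p(\underline{s})+\sum_{d(\underline{s})=d}c_{\underline{s}}^{+}p^{-s}m^{-s}\sigma_p(\underline{s}')$ (with $s$ the last entry of $\underline{s}$), and since $D\Theta\in W_{d-1}$ while the second sum already lies in $W_{d-1}$, the depth-$d$ part $\Theta':=\sum Dc_{\underline{s}}\,\sigma_p(\underline{s})$ again lies in $W_{d-1}$. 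Thus for any top index $\underline{s}_1$ the element $c_{\underline{s}_1}\Theta'-Dc_{\underline{s}_1}\,\Theta\in W_{d-1}$ has its $\underline{s}_1$-coefficient equal to zero, hence fewer than $N$ nonzero top coefficients, and by minimality it vanishes identically. This says $c_{\underline{s}_1}Dc_{\underline{s}}-Dc_{\underline{s}_1}c_{\underline{s}}=0$, i.e. $D(c_{\underline{s}}/c_{\underline{s}_1})=0$, for every top $\underline{s}$, and the Lemma (in the degenerate case $C_i=0$) forces each ratio $c_{\underline{s}}/c_{\underline{s}_1}$ to be a constant $\lambda_{\underline{s}}\in\mathbb{Q}_p$. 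Dividing through by $c_{\underline{s}_1}$, I may therefore assume the top coefficients are constants $\lambda_{\underline{s}}$, not all zero.

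Once the top coefficients are constant, $D$ genuinely lowers depth, and the proof finishes with a second appeal to the Lemma. Writing each top index as $\underline{s}=(\underline{w},s)$ with $\underline{w}$ of depth $d-1$, and writing the relation as $\sum_{d(\underline{s})=d}\lambda_{\underline{s}}\sigma_p(\underline{s})=\sum_{d(\underline{u})\le d-1}\nu_{\underline{u}}\sigma_p(\underline{u})$ with $\nu_{\underline{u}}\in K$, I apply $D$. Since $D\lambda_{\underline{s}}=0$, the left side becomes $\sum_{\underline{w}}\big(\sum_{s}\lambda_{(\underline{w},s)}p^{-s}m^{-s}\big)\sigma_p(\underline{w})$, a combination of depth-$(d-1)$ terms, while on the right the coefficient of $\sigma_p(\underline{w})$ is $D\nu_{\underline{w}}$. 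By $P(d-1)$ these coefficients agree, so $D\nu_{\underline{w}}=\sum_{s\ge 1}(\lambda_{(\underline{w},s)}p^{-s})\,m^{-s}$ for each prefix $\underline{w}$. This is exactly the hypothesis of the Lemma with $C_s=\lambda_{(\underline{w},s)}p^{-s}$ (here I use that every entry of an index lies in $\mathbb{N}=\mathbb{Z}_{>0}$, so only strictly negative powers of $m$ occur), and the Lemma forces every $C_s=0$, hence every $\lambda_{(\underline{w},s)}=0$. As $\underline{w}$ ranges over all prefixes this kills all $\lambda_{\underline{s}}$, the desired contradiction; the case $N=1$ is the same computation with a single prefix. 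I expect the genuine difficulty to be precisely the reduction to constant top coefficients of the previous paragraph, the remainder being bookkeeping together with the two direct applications of the Lemma.
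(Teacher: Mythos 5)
Your proof is correct and follows essentially the same route as the paper: induction on depth using the difference operator, reduction to constant top-depth coefficients (you via a minimal-counterexample and the Wronskian-type combination $c_{\underline{s}_1}\Theta'-Dc_{\underline{s}_1}\Theta$, the paper via a secondary induction on the number of nonzero top coefficients after normalizing one of them to $1$), followed by two applications of the Lemma on the difference equation $h(z+1)-h(z)=\sum_i C_i z^{-i}$. Your explicit change of variable $n=pm$ and your observation that the basis indices must have strictly positive entries are details the paper leaves implicit, but they do not change the argument.
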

 
 \begin{proof}  By induction on $m$ we will show the linear independence of the set $S_{m}:=\{\sigma_{p}(\underline{s}) | d(\underline{s})\leq m  \} .$ For any function $f: p \mathbb{N} \to \mathbb{Q}_{p},$ we let $\delta(f)$ denote the function defined by 
$\delta(f)(n):=f(n+p)-f(n).$    Note that 
 $$
 \delta \sigma_{p}(\alpha_{1},\cdots, \alpha_{m+1})(n)=\frac{1}{n^{\alpha_{m+1}}} \sigma_{p}(\alpha_{1},\cdots, \alpha_{m})(n) .$$
 
 We know the linear independence for  the set $S_{0}=\{ 1\}.$ Assuming that  we know the linear independence for  $S_{m},$ we will prove it for  $S_{m+1}.$ Let us suppose that $\{\sigma_{p}(\alpha_{1},\cdots , \alpha_{m+1})  \} \cup S_{m}$ is linearly dependent over $\mathcal{P}.$ Then we have an expression of the form 
 $$
 \sigma_{p}(\alpha_{1}, \cdots, \alpha_{m+1})=\sum_{d(\alpha) \leq m} a_{\alpha}\sigma_{p}(\alpha),
 $$
 with $a_{\alpha}$ a quotient of elements in $\mathcal{P}.$

 Applying $\delta$ to the last equation we get
 $$
 \frac{1}{z^{\alpha_{m+1}}} \sigma_{p}(\alpha_{1},\cdots, \alpha_{m})=\sum_{d(\alpha)=m} \delta (a_{\alpha}) \sigma_{p}(\alpha)+ \sum _{d(\alpha)<m}b_{\alpha}\sigma_{p}(\alpha).
 $$
 
 The induction hypothesis implies that $\frac{1}{z^{\alpha_{m+1}}} =\delta(a_{\alpha_{1}\cdots \alpha_{m}}),$  and this contradicts the lemma above. 
 
 Next we do an induction on the number of elements $\sigma_{p}(\alpha)$ with $d(\alpha)=m+1,$ and $a_{\alpha}\neq 0.$ Suppose that we have a non-trivial equation 
 $$
 \sum _{d(\alpha) \leq m+1 } a_{\alpha} \sigma_{p}(\alpha)=0.
 $$
 By the induction assumption on $m,$ there is a $\beta=(\alpha_{1},\cdots, \alpha_{m+1})$ such that $a_{\beta}\neq 0.$ Dividing by this and rearranging we get 
 $$
 \sigma_{p}(\beta)+ \sum_{ d(\alpha)=m+1 \atop {\alpha \neq \beta}} b_{\alpha}\sigma_{p}(\alpha)= \sum _{d(\alpha) \leq m} b_{\alpha}\sigma_{p}(\alpha),
 $$
 where $b_{\alpha}$ are quotients of power series functions. 
 Applying $\delta$ to this equation and using induction on the number of $b_{\alpha}\neq 0$ with $d(\alpha)=m+1$ we obtain $\delta(b_{\alpha})=0$ for all $\alpha$ with $d(\alpha)=m+1, $ hence these $b_{\alpha}$ are constant and equal to, say $c_{\alpha}. $
 
 So the last equation can be rewritten as 
 $$
 \sigma_{p}(\beta)+ \sum_{d(\alpha)=m+1 \atop {\alpha \neq\beta}} c_{\alpha}\sigma_{p}(\alpha)= \sum _{d(\alpha) \leq m } b_{\alpha}\sigma_{p}(\alpha),
 $$
 applying $\delta$ we obtain that 
 $$
 \frac{1}{z^{\alpha_{m+1}}}+\sum _{k\in \mathbb{N} \atop {k \neq \alpha_{m+1}}} c_{(\alpha_{1}, \cdots ,\alpha_{m},k) } \frac{1}{z^{k}}=\delta (b_{(\alpha_{1}, \cdots, \alpha_{m})}).
 $$
 The above lemma again gives a contradiction. 
\end{proof}

Let $\mathcal{F}$ denote the algebra of power series functions and $\iota \in \mathcal{F}$ denote the function that sends $n$ to $n.$ Let $\mathcal{F}(\frac{1}{\iota})$ be  the algebra obtained by inverting $\iota.$ Note that $\iota$ is already invertible on the components  $i+p\mathbb{N}$  with $0<i<p.$   Let $\mathcal{F}_{\sigma}$ be the module over $\mathcal{F}$ generated by  $\sigma_{p}(\underline{s})$ with $\underline{s}\in \cup_{n} \mathbb{N}^{\times n}.$ Then by the shuffle product formula for series, $\mathcal{F}_{\sigma }$ is an algebra. Let $\mathcal{F}_{\sigma} (\frac{1}{\iota})=\mathcal{F}_{\sigma} \otimes _{\mathcal{F}} \mathcal{F}(\frac{1}{\iota}).$ 

\begin{corollary}\label{freecorollary}
The algebra $\mathcal{F}_{\sigma}$ (resp. $\mathcal{F}_{\sigma}(\frac{1}{\iota})$) is free with basis $\{\sigma _{p} (\underline{s}) | \underline{s}\in \cup_{n} \mathbb{N}^{\times n} \}$ as a module over $\mathcal{F}$ (resp. $\mathcal{F}(\frac{1}{\iota})$). 
\end{corollary}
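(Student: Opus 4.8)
The plan is to bootstrap from the preceding proposition, which gives freeness over $\mathcal{P}$, by exploiting the residue-class decomposition of $\mathcal{F}$. First I would note that, by definition, a power series function is nothing but a tuple $(p_{0},\dots,p_{p-1})$ of power series convergent on discs of radius $>|p|$, one attached to each class $i+p\mathbb{N}$, and that sums and products in $\mathcal{F}$ are computed separately on each class; hence $\mathcal{F}\cong\prod_{i=0}^{p-1}R_{i}$, where $R_{i}$ denotes the algebra of such power series on the class $i+p\mathbb{N}$ and $R_{0}=\mathcal{P}$. Since a relation $\sum_{\underline{s}}a_{\underline{s}}\,\sigma_{p}(\underline{s})=0$ in $\mathcal{F}_{\sigma}$ holds precisely when it holds after restriction to each class, the required $\mathcal{F}$-linear independence of the $\sigma_{p}(\underline{s})$ reduces to the $R_{i}$-linear independence of the restricted functions $\sigma_{p}(\underline{s})\vert_{i+p\mathbb{N}}$, for every $i$.

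For $i=0$ this is exactly the content of the preceding proposition. For $i\neq 0$ I would rerun its proof verbatim, the only point needing attention being the applicability of the lemma on power series. Writing $n=i+pt$ on this class and letting $N$ be the unique multiple of $p$ in $[n,n+p)$, the basic recursion reads $\delta\sigma_{p}(\underline{s})(n)=N^{-s_{k}}\sigma_{p}(\underline{s}')(n)$; on $i+p\mathbb{N}$ one has $N=p(t+1)$, so its coefficient is $p^{-s_{k}}(t+1)^{-s_{k}}$ in place of the $p^{-s_{k}}t^{-s_{k}}$ occurring on the class $0$. Carrying the induction of the preceding proof through then forces an identity $h(t+1)-h(t)=C\,(t+1)^{-s_{k}}$ with $h$ a ratio of power series convergent on some $D(a)$, $a>1$, and $C\neq 0$. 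Setting $H(z):=h(z-1)$ rewrites this as $H(z+1)-H(z)=C\,z^{-s_{k}}$; since $|1|=1<a$, translation by $1$ carries $D(a)$ onto itself, so $H$ is again a ratio of functions convergent on $D(a)$, the lemma applies, and the same contradiction results. This yields the $R_{i}$-independence for every $i$, hence the freeness of $\mathcal{F}_{\sigma}$ over $\mathcal{F}$.

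The assertion for $\mathcal{F}_{\sigma}(\frac{1}{\iota})$ then comes formally: $\mathcal{F}(\frac{1}{\iota})$ is the localization of $\mathcal{F}$ inverting the multiplicative set generated by $\iota$, hence is flat over $\mathcal{F}$, and the base change of a free module along a localization is free on the same basis. As $\mathcal{F}_{\sigma}(\frac{1}{\iota})=\mathcal{F}_{\sigma}\otimes_{\mathcal{F}}\mathcal{F}(\frac{1}{\iota})$ by definition, the images of the $\sigma_{p}(\underline{s})$ are a basis over $\mathcal{F}(\frac{1}{\iota})$. Equivalently, inverting $\iota$ only replaces the factor $R_{0}=\mathcal{P}$ by $\mathcal{P}[\tfrac{1}{\iota}]$ inside its fraction field while leaving the factors $R_{i}$ with $i\neq 0$ untouched, and the preceding proof already allowed the coefficients to be arbitrary quotients of elements of $\mathcal{P}$, so it covers $\mathcal{P}[\tfrac{1}{\iota}]$ as well.

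The step I expect to be the main obstacle is the componentwise independence on the nonzero residue classes, where one must be sure that the shift $t\mapsto t+1$ appearing in the recursion does not spoil the hypotheses of the power series lemma. This is precisely what the non-archimedean invariance of the disc $D(a)$, $a>1$, under translation by a unit takes care of, and it is the only genuine difference from the case $i=0$ handled by the preceding proposition.
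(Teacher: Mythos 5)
The paper states this corollary without proof, treating it as immediate from the preceding proposition, so there is no official argument to compare yours against; what you have written is a correct and complete filling-in of the details. Both of your reductions are sound: the componentwise decomposition $\mathcal{F}\cong\prod_{i}R_{i}$ is exactly what the definition of a power series function gives, and the base-change step for $\mathcal{F}_{\sigma}(\frac{1}{\iota})$ is formal (the paper itself remarks that $\iota$ is already invertible on the components $i+p\mathbb{N}$ with $i\neq 0$, which matches your observation that only the factor $R_{0}$ changes under localization). For the nonzero residue classes you could shorten the argument: since the sum defining $\sigma_{p}(\underline{s})(n)$ runs only over multiples of $p$ below $n$, one has $\sigma_{p}(\underline{s})(i+pt)=\sigma_{p}(\underline{s})(p(t+1))$ for $0<i<p$, so the restriction to the class $i+p\mathbb{N}$ is literally a unit translate of the restriction to $p\mathbb{N}$; as translation by $1$ preserves $D(a)$ for $a>1$, any dependence relation on the class $i$ transports to one on $p\mathbb{N}$ and the proposition applies verbatim, with no need to rerun its induction. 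Your version --- rerunning the proof, noting that the recursion becomes $\delta\sigma_{p}(\underline{s})(n)=N^{-s_{k}}\sigma_{p}(\underline{s}')(n)$ with $N=p(t+1)$, and checking via the substitution $H(z)=h(z-1)$ that the power series lemma still applies --- is equivalent and equally valid; you correctly isolated the only point that needs care, namely that the shifted pole location does not leave the domain of convergence.
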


\begin{definition}
Let $\mathfrak{r}: \mathcal{F}_{\sigma}\to \mathcal{F}$ denote the projection with respect to the above basis. We will denote the projection $\mathcal{F}_{\sigma}(\frac{1}{\iota})\to \mathcal{F}(\frac{1}{\iota})$ by the same notation. Similarly, let $\mathfrak{s}: \mathcal{F}(\frac{1}{\iota}) \to \mathcal{F}$ denote the projection that has the effect of deleting the principal part of the Laurent series expansion for the component $p\mathbb{N},$ and is identity on the components $i+p\mathbb{N}$ with $0<i<p.$ 

\end{definition}

Let $\underline{s}:=(s_{1},\cdots,s_{k}),$ and $\underline{t}:=(t_{1},\cdots,t_{l}).$ We write $\underline{t} \leq \underline{s}$ if there  exists an increasing function $j: \{1,\cdots, l \} \to \{1,\cdots , k \}$  such that $t_{i} \leq s_{j(i)},$ for all $i.$

\begin{lemma}
 Let $f$ be a power series function and let $g$ be defined as 
 $$
 g(n)=\sum _{0<a<n} f(a)\sigma_{p}(\underline{s})(a)
 $$
 for some $\underline{s}:=(s_{1},\cdots,s_{k}).$ Then 
 $$
 g=\sum _{\underline{t} \leq \underline{s}} f_{\underline{t}} \sigma_{p}(\underline{t}),
 $$
 for some power series functions $f_{\underline{t}}.$ Similarly, 
 if $h$ is defined as 
 $$
 h(n):=\sum_{0<a<n \atop {p|a}} \frac{f(a)}{a^{s}} \sigma_{p}(\underline{s})(a),
 $$ for some $s\geq 1$ then 
 $$
 h=\sum _{\underline{t} \leq \underline{s}'} f_{\underline{t}} \sigma_{p}(\underline{t}),
 $$
 for some power series functions $f_{\underline{t}},$ 
 where $\underline{s}':=(s_{1},\cdots,s_{k},s).$ 
 \end{lemma}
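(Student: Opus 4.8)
The plan is to prove the two statements \emph{simultaneously} by induction on the depth $k = d(\underline{s})$, because the two sums feed into one another. Write $A(k)$ for the first assertion (with $d(\underline{s})=k$) and $B(k)$ for the second. The base case $A(0)$ is immediate: when $\underline{s}=\emptyset$ we have $\sigma_{p}(\emptyset)=1$, so $g(n)=\sum_{0<a<n}f(a)$ is a power series function by Proposition~\ref{basic}(iii), and $g = g\cdot\sigma_{p}(\emptyset)$ with $\emptyset\leq\emptyset$. I will then establish the two implications $A(k)\Rightarrow B(k)$ (for $k\geq 0$) and $B(k-1)\Rightarrow A(k)$ (for $k\geq 1$); chaining $A(0)\Rightarrow B(0)\Rightarrow A(1)\Rightarrow B(1)\Rightarrow\cdots$ then closes the induction.

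For $B(k-1)\Rightarrow A(k)$ the engine is interchanging the order of summation. Setting $F(m):=\sum_{0<a<m}f(a)$, which is a power series function by Proposition~\ref{basic}, and using $\sum_{n_{k}<a<n}f(a)=F(n)-F(n_{k}+1)$ after peeling off the innermost variable, one obtains
\[
g(n) = F(n)\,\sigma_{p}(\underline{s})(n) - \sum_{0<a<n \atop {p|a}} \frac{F(a+1)}{a^{s_{k}}}\,\sigma_{p}(s_{1},\ldots,s_{k-1})(a).
\]
The first term already has the required shape, since $\underline{s}\leq\underline{s}$ and $F\in\mathcal{F}$; note that the shift $a\mapsto F(a+1)$ is again a power series function, as it merely permutes the residue classes mod $p$ and translates the attached series. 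The subtracted term is exactly an instance of $B(k-1)$ applied to the power series function $a\mapsto F(a+1)$, the exponent $s_{k}\geq 1$, and the tuple $(s_{1},\ldots,s_{k-1})$ of depth $k-1$; its conclusion produces $\sum_{\underline{t}\leq(s_{1},\ldots,s_{k-1},s_{k})}f_{\underline{t}}\,\sigma_{p}(\underline{t})$ with $f_{\underline{t}}\in\mathcal{F}$, and $(s_{1},\ldots,s_{k-1},s_{k})=\underline{s}$, giving $A(k)$.

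For $A(k)\Rightarrow B(k)$ I expand the coefficient $f(a)/a^{s}$ on the component $p\mathbb{N}$. Writing $f(a)=\sum_{j\geq 0}c_{j}a^{j}$ for the power series attached to $f$ there, one has for $p|a$
\[
\frac{f(a)}{a^{s}} = \phi(a) + \sum_{i=1}^{s}\frac{c_{s-i}}{a^{i}}, \qquad \phi(a):=\sum_{j\geq s}c_{j}\,a^{j-s},
\]
where $\phi$ converges on the same disc and hence defines a power series function. Substituting, the principal-part terms contribute $\sum_{i=1}^{s}c_{s-i}\sum_{0<a<n,\,p|a}a^{-i}\sigma_{p}(\underline{s})(a)=\sum_{i=1}^{s}c_{s-i}\,\sigma_{p}(s_{1},\ldots,s_{k},i)(n)$, using the defining identity $\sum_{0<a<n,\,p|a}a^{-i}\sigma_{p}(\underline{s})(a)=\sigma_{p}(\underline{s},i)(n)$; since $1\leq i\leq s$ we have $(s_{1},\ldots,s_{k},i)\leq\underline{s}'$. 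The remaining term $\sum_{0<a<n,\,p|a}\phi(a)\sigma_{p}(\underline{s})(a)$ equals $\sum_{0<a<n}\psi(a)\sigma_{p}(\underline{s})(a)$, where $\psi$ is the power series function agreeing with $\phi$ on $p\mathbb{N}$ and vanishing on the other residue classes; by $A(k)$ this is $\sum_{\underline{t}\leq\underline{s}}f_{\underline{t}}\,\sigma_{p}(\underline{t})$, and every such $\underline{t}$ also satisfies $\underline{t}\leq\underline{s}'$. Collecting both contributions yields $B(k)$.

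The main obstacle is the bookkeeping that keeps all coefficients inside $\mathcal{F}$ rather than merely in $\mathcal{F}(\tfrac{1}{\iota})$: one must check that each auxiliary function produced ($F$, its shift $a\mapsto F(a+1)$, the regularized tail $\phi$, its zero-extension $\psi$, and the various partial sums) is genuinely a power series function, which is precisely what Proposition~\ref{basic} guarantees. The delicate point is that the pole of $a^{-s}$ is absorbed entirely into the explicit constants $c_{s-i}$ attached to the lower-weight generators $\sigma_{p}(\underline{s},i)$, leaving a regular remainder; this is exactly why the decomposition above, and not a naive application of $A(k)$ to $f(a)/a^{s}$, is forced.
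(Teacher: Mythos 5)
Your proof is correct and takes essentially the same route as the paper: the same simultaneous induction on depth, with the $g$-statement at depth $k$ obtained by Abel summation over the innermost variable (reducing to the $h$-statement at depth $k-1$), and the $h$-statement at depth $k$ obtained by splitting $f(a)/a^{s}$ into its principal part, absorbed directly into the generators $\sigma_{p}(s_{1},\ldots,s_{k},i)$ with $i\leq s$, plus a regular remainder handled by the $g$-statement at depth $k$. The only differences are notational (your convention for $F$ and the explicit shift $a\mapsto F(a+1)$).
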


\begin{proof} 
We will prove this by induction on $d(\underline{s}).$ Suppose that $d(\underline{s})=0$ and hence $\sigma_{p}(\underline{s})=1.$ Then for $g$ the assertion follows from Proposition \ref{basic}. Let $f(z)=\sum _{0 \leq i}b_{i} z^{i},$ for $|z|\leq |p|,$ then 
$$
h(n)=b_{0}\sigma_{p}(s)(n)+\cdots+ b_{p-1}\sigma_{p}(1)(n)+\sum _{0<a<n \atop {p|a}}\overline{f}(a),
$$
 where $\overline{f}(z):=b_{p}+b_{p+1}z+\cdots .$ Again the statement follows from Proposition \ref{basic}.
 
Now assume the statement for all $\underline{s}$ with $d(\underline{s})\leq k$ and fix $\underline{s}:=(s_{1},\cdots,s_{k+1}).$  Let $F$ be as in Proposition \ref{basic},  then 
$$
g(n)=F(n-1)\sigma_{p}(\underline{s})(n)-\sum_{0<n_{k+1}<n \atop {p|n_{k+1}}}\frac{F(n_{k+1})}{n_{k+1}^{s_{k+1}}}\sigma_{p}(s_{1},\cdots,s_{k})(n_{k+1})
$$ 
 and the statement follows from the induction hypothesis on $h.$
 
On the other hand, 
\begin{eqnarray*}
h(n)&=&\sum_{0<a<n \atop {p|a}}(\frac{b_{0}}{a^s}+\cdots +\frac{b_{s-1}}{a} +\overline{f}(a))\sigma_{p}(\underline{s})(a) \\
&=& b_{0}\sigma_{p}(\underline{s},s)(n)+\cdots + b_{s-1}\sigma_{p}(\underline{s},1)(n)+\sum_{0<a<n \atop {p|a}}\overline{f}(a)\sigma_{p}(\underline{s})(a)
\end{eqnarray*}
 and the statement follows by the statement that we just proved on $g.$
 \end{proof}

\begin{proposition}
For any $\underline{s}$ and $\underline{m},$ $\sigma(\underline{s};\underline{m}) \in \mathcal{F}_{\sigma}.
$
\end{proposition}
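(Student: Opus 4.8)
The plan is to induct on the depth $d(\underline{s})=k$. For the base case $k=0$ we have $\sigma(\emptyset;\emptyset)=1=\sigma_{p}(\emptyset)\in\mathcal{F}_{\sigma}$, so there is nothing to do. For the inductive step I would isolate the outermost summation variable: writing $\underline{s}'=(s_{1},\cdots,s_{k-1})$ and $\underline{m}'=(m_{1},\cdots,m_{k-1})$,
$$
\sigma(\underline{s};\underline{m})(n)=\sum_{0<a<n,\ p\mid(a-m_{k})}\frac{1}{a^{s_{k}}}\,\sigma(\underline{s}';\underline{m}')(a).
$$
By the inductive hypothesis $\sigma(\underline{s}';\underline{m}')=\sum_{\underline{u}}g_{\underline{u}}\,\sigma_{p}(\underline{u})$ with each $g_{\underline{u}}\in\mathcal{F}$, and since $\mathcal{F}_{\sigma}$ is an $\mathcal{F}$-module it is enough to show that each inner sum $\sum_{0<a<n,\ a\equiv m_{k}}a^{-s_{k}}g_{\underline{u}}(a)\,\sigma_{p}(\underline{u})(a)$ lies in $\mathcal{F}_{\sigma}$. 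This is exactly what the preceding lemma delivers, provided I match each inner sum to one of its two shapes $g$ or $h$.

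The matching is governed by whether the coefficient $a^{-s_{k}}$ (restricted to the residue class $a\equiv m_{k}\pmod p$) extends to a power series function. If $m_{k}\neq 0$, then $a\equiv m_{k}$ forces $p\nmid a$, so $a^{-s_{k}}$ has no pole; the function equal to $a^{-s_{k}}$ on the component $m_{k}+p\mathbb{N}$ and zero elsewhere is a power series function by Proposition~\ref{basic}(i) (it is the product of the residue-class indicator with $n\mapsto n^{-s_{k}}$, which is itself handled there), and multiplying by $g_{\underline{u}}$ keeps it one. The inner sum then has the shape of the function $g$ of the preceding lemma and lies in $\mathcal{F}_{\sigma}$. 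If instead $m_{k}=0$ and $s_{k}\geq 1$, the condition is $p\mid a$ and $a^{-s_{k}}$ genuinely has a pole on that component; here the inner sum is precisely the function $h$ of the preceding lemma (with $f=g_{\underline{u}}$ and $s=s_{k}$), so it again lies in $\mathcal{F}_{\sigma}$. The remaining degenerate subcase $m_{k}=0$, $s_{k}=0$ has no pole and folds back into the shape $g$ via the indicator of $p\mathbb{N}$. Summing over $\underline{u}$ and using that $\mathcal{F}_{\sigma}$ is a module completes the induction.

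The one point that needs genuine care is the dichotomy above: the reason the preceding lemma was stated with the two separate forms $g$ and $h$ is exactly that the factor $a^{-s_{k}}$ can be absorbed into a power series coefficient only when it has no pole, i.e. when $p\nmid a$; the polar case $p\mid a$, $s_{k}\geq 1$ cannot be treated this way and must be routed through the $h$-mechanism, which expands the troublesome sum directly against the basis $\{\sigma_{p}(\underline{t})\}$. Checking that the no-pole coefficient really is a power series function — concretely, that $(m_{k}+w)^{-s_{k}}$ converges as a series in $w\in p\mathbb{Z}_{p}$ on a disc of radius $>|p|$ — is the only computation, and it is immediate from the invertibility of $\iota$ on the components $i+p\mathbb{N}$ with $0<i<p$.
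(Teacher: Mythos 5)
Your proof is correct and follows essentially the same route as the paper: induct on the depth, peel off the outermost summation variable, and reduce to the two shapes handled by the preceding lemma. The only difference is that you spell out the case analysis on $m_{k}$ (absorbing the factor $a^{-s_{k}}$ into the power series coefficient when $m_{k}\neq 0$, and invoking the $h$-form only when $m_{k}=0$, $s_{k}\geq 1$), which the paper compresses into the single remark that the reduction ``is exactly the statement of the previous lemma.''
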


\begin{proof}
We will prove this by induction on $d(\underline{s}).$ If $d(\underline{s})=1,$ then $\sigma(\underline{s},\underline{m})=\sigma_{p}(\underline{s})$ if $m_{1}=0;$ and  $\sigma(\underline{s},\underline{m})\in \mathcal{F}$ otherwise by Proposition \ref{basic}. Suppose we know the result for $d(\underline{s})\leq k,$ and fix $\underline{s}$ with $d(\underline{s})=k+1.$  

Let $\underline{s}=(s_{1},\cdots,s_{k+1}),$ $\underline{s}'=(s_{1},\cdots,s_{k}),$ $\underline{m}=(m_{1},\cdots,m_{k+1}),$ and $\underline{m}'=(m_{1},\cdots,m_{k}).$  Since 
$$\sigma(\underline{s};\underline{m})(n)=\sum _{0<a<n \atop {p| (a-m_{k+1})}} \frac{\sigma(\underline{s}';\underline{m}')(a)}{a^{s_{k+1}}},$$
using the induction hypothesis we realize that we only need to show that functions of the   form 
$$
\sum _{0<a<n \atop {p| (a-m)}}\frac{f(a)}{a^{s}}\sigma_{p}(\underline{t})(a),
$$
with $f$ a power series function, are in $\mathcal{F}_{\sigma}$ and this is exactly the statement of the previous lemma. 
 \end{proof}

In fact, from the proof above it follows that $\sigma(\underline{s};\underline{m})$  is an $\mathcal{F}$-linear combination of $\sigma_{p} (\underline{t})$ with $\underline{t} \leq \underline{s}.$  
 
 \begin{definition}
 For a function $f \in \mathcal{F}_{\sigma},$ let $\tilde{f}:=\mathfrak{r}(f) \in \mathcal{F}.$  We call $\tilde{f}$ the {\it regularization}  of $f.$ Since by the previous proposition $\sigma(\underline{s};\underline{m}) \in \mathcal{F}_{\sigma},$ we let $\tilde{\sigma}(\underline{s};\underline{m})\in \mathcal{F}$ its regularization and  $\underline{\sigma}(\underline{s};\underline{m})=\lim_{n \to 0}\tilde{\sigma}(\underline{s};\underline{m})(n).$ 

 \end{definition}
 
 For a function $f: \mathbb{N} \to \mathbb{Q}_{p}$ and $0 \leq m <p,$ let $f_{[m]}$ denote the function which is equal to $f$ for values $n$ which are congruent to $m$ modulo $p$ and is 0 otherwise.  Recall that $\gamma(\underline{s};\underline{m})(n):=n^{-s_{k}}\cdot\sigma(\underline{s}';\underline{m}')_{[m_{k}]}(n).$  We will define the regularized version $\tilde{\gamma}(\underline{s};\underline{m})$ of $\gamma(\underline{s};\underline{m})$ as follows. If $m_{k}\neq 0,$ then it is defined by $\tilde{\gamma}(\underline{s};\underline{m})(n)=n^{-s_{k}}\cdot\tilde{\sigma}(\underline{s}';\underline{m}')_{[m_{k}]}(n).$ If $m_{k}=0,$ and $p(z)=a_{0}+a_{1}z+\cdots$ is such that $\tilde{\sigma}(\underline{s}';\underline{m}')(n)=p(n)$ for $p|n,$ then 
 $\tilde{\gamma}(\underline{s};\underline{m})(n)=a_{s_{k}}+a_{s_{k}+1}n+\cdots ,$ if $p|n$ and 0 otherwise. Finally we let $\underline{\gamma}(\underline{t};\underline{m})=\lim_{n \to 0}\tilde{\gamma}(\underline{t};\underline{m})(n).$ 
 
 Another way to describe this is as follows. For any $\underline{s}$ and $\underline{m}, $ $\gamma(\underline{s};\underline{m}) \in \mathcal{F}_{\sigma}(\frac{1}{\iota}),  $  and $\tilde{\gamma}(\underline{s};\underline{m}):=\mathfrak{s} \circ \mathfrak{r}(\gamma(\underline{s};\underline{m})).$
  
\begin{remark}\label{derivativeofsigma}
Note that $\tilde{\sigma}(\underline{s};\underline{m})^{(n)}(0)=-n!\cdot\underline{\sigma}(\underline{s},n;\underline{m},0)=n! \cdot \underline{\gamma}(\underline{s},n;\underline{m},0).$  The first identity follows from the fact that if $P_{k}(z)$ is the polynomial such that $P_{k}(n)$ is the sum of the $k$-th powers of the first $n$ positive integers then $(z+1)|P_{k}(z),$ for $k \geq 1.$    
\end{remark}

\begin{definition}
Let $\pazocal{P}_{w}$ (resp. $\pazocal{S}_{w},$ resp. $\tilde{\pazocal{S}}_{w}$) denote the   $\mathbb{Q}$-space spanned  by  the $\underline{\sigma}(\underline{s};\underline{m})$ (resp. $\gamma(\underline{s};\underline{m}),$ resp. $\tilde{\gamma}(\underline{s};\underline{m})$), with $w(\underline{s})= w,$ and $\pazocal{P}:=\sum_{w} \pazocal{P}_{w}$ (resp. $\pazocal{S}:=\sum_{w} \pazocal{S}_{w},$ resp. $\tilde{\pazocal{S}}:=\sum_{w} \tilde{\pazocal{S}}_{w}$).
\end{definition}

Let $\omega_{i}:=dlog(z-i),$ for $i=0,1$ and $\omega_{p}:=dlog(z^{p}-1).$

\begin{lemma} Let $f(z)=\sum _{1 \leq n}a_{n} z^{n},$ such that 
$df=\omega\sum _{1 \leq n} \gamma(\underline{s};\underline{m})(n)z^{n}.$ 

If $\omega=\omega_{0}$ then $a_{n}=\gamma((s_{1},\cdots,s_{k}+1);\underline{m})(n).$  

If $\omega=\omega_{1}$ then $a_{n}=-\sum_{0\leq i \leq p-1}\gamma(\underline{s},1;\underline{m},i)(n).$

If $\omega=\omega_{p}$ then $a_{n}=-p\gamma(\underline{s},1;\underline{m},m_{k})(n).$
\end{lemma}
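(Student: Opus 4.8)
The plan is to reduce the differential identity $df=\omega\cdot\sum_{n\ge 1}\gamma(\underline{s};\underline{m})(n)z^{n}$ to an equality of formal power series in $\mathbb{Q}_{p}[[z]]$ and compare coefficients. Writing $G(z):=\sum_{n\ge 1}\gamma(\underline{s};\underline{m})(n)z^{n}$ and dividing $\omega$ by $dz$, each case takes the form $f'(z)=\phi(z)\,G(z)$, where $\phi(z)$ is the rational function $z^{-1}$, $(z-1)^{-1}$, or $pz^{p-1}(z^{p}-1)^{-1}$ respectively. In each case I would expand $\phi$ as a power series about $0$, multiply by $G$, and read off $na_{n}$ from the coefficient of $z^{n-1}$ in $f'(z)=\sum_{n\ge 1}na_{n}z^{n-1}$.

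The combinatorial engine of all three cases is the telescoping identity
$$
\sum_{0<n<N}\gamma(\underline{s};\underline{m})(n)=\sigma(\underline{s};\underline{m})(N),
$$
which is immediate from the definitions: fixing the top index $n_{k}=n$ in the sum defining $\sigma(\underline{s};\underline{m})(N)$ contributes the factor $n^{-s_{k}}$ under the congruence $p\mid(n-m_{k})$, times the sum $\sigma(\underline{s}';\underline{m}')(n)$ over $0<n_{1}<\cdots<n_{k-1}<n$, which is exactly $\gamma(\underline{s};\underline{m})(n)$. I also record two recognition identities: $\sum_{0\le i\le p-1}\gamma(\underline{s},1;\underline{m},i)(N)=N^{-1}\sigma(\underline{s};\underline{m})(N)$, because exactly one residue $i$ meets the congruence; and $\gamma(\underline{s},1;\underline{m},m_{k})(N)=N^{-1}\sigma(\underline{s};\underline{m})(N)$ when $p\mid(N-m_{k})$ and $0$ otherwise.

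With these in hand each case is a short computation. For $\omega=\omega_{0}$ one has $\phi=z^{-1}$, so $f'(z)=\sum_{n}\gamma(\underline{s};\underline{m})(n)z^{n-1}$ and $na_{n}=\gamma(\underline{s};\underline{m})(n)$; dividing the defining expression of $\gamma$ by $n$ raises $s_{k}$ to $s_{k}+1$, giving $a_{n}=\gamma((s_{1},\cdots,s_{k}+1);\underline{m})(n)$. For $\omega=\omega_{1}$ I would use $\phi=(z-1)^{-1}=-\sum_{j\ge 0}z^{j}$, so that the coefficient of $z^{N-1}$ yields $Na_{N}=-\sum_{0<n<N}\gamma(\underline{s};\underline{m})(n)=-\sigma(\underline{s};\underline{m})(N)$ by the telescoping identity, and the first recognition identity rewrites this as $a_{N}=-\sum_{0\le i\le p-1}\gamma(\underline{s},1;\underline{m},i)(N)$. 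For $\omega=\omega_{p}$ I would expand $\phi=pz^{p-1}(z^{p}-1)^{-1}=-p\sum_{j\ge 0}z^{pj+p-1}$; the coefficient of $z^{N-1}$ then collects exactly those $\gamma(\underline{s};\underline{m})(n)$ with $0<n<N$ and $n\equiv N\pmod p$, so $Na_{N}=-p\sum\gamma(\underline{s};\underline{m})(n)$ over this restricted range. Since $\gamma(\underline{s};\underline{m})$ is supported on $n\equiv m_{k}\pmod p$, this restricted sum equals $\sigma(\underline{s};\underline{m})(N)$ when $p\mid(N-m_{k})$ and vanishes otherwise, i.e. $a_{N}=-p\,\gamma(\underline{s},1;\underline{m},m_{k})(N)$ by the second recognition identity.

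The only genuinely delicate point is the bookkeeping in the $\omega_{p}$ case: one must track the index shift in the expansion $-p\sum_{j\ge 0}z^{pj+p-1}$ so that the surviving indices are precisely $n=N-pj'$ with $j'\ge 1$, and then observe that the residue restriction $n\equiv N\pmod p$ interacts with the support condition $n\equiv m_{k}\pmod p$ of $\gamma$ to reproduce exactly the indicator of $p\mid(N-m_{k})$. Everything else is an identity of formal power series, so no convergence or analytic input is needed beyond the fact that each $\phi$ has a power series expansion on a punctured neighborhood of $0$.
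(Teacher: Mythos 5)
Your proposal is correct and carries out exactly the ``elementary computation'' that the paper leaves to the reader: all three cases follow from expanding $\omega/dz$ as a power series, comparing coefficients, and using the telescoping identity $\sum_{0<n<N}\gamma(\underline{s};\underline{m})(n)=\sigma(\underline{s};\underline{m})(N)$ together with the support condition $p\mid(n-m_{k})$ of $\gamma$. The bookkeeping in the $\omega_{p}$ case is handled correctly, so there is nothing to add.
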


\begin{proof}
Elementary computation.
\end{proof}

\begin{corollary}\label{corinddif}
Suppose that  $f(z)=\sum _{1 \leq n}q(n) z^{n},$ such that $$df=\omega_{i} \sum _{1\leq i}\alpha(n)z^{n},$$ with $i=0,1$ or $p$ and $\alpha \in \pazocal{S}_{w}.$  Then $q \in \pazocal{S}_{w+1}.$ 
\end{corollary}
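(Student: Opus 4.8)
The plan is to reduce the statement to the preceding Lemma by exploiting linearity. The key observation is that, for a fixed form $\omega_{i}$, the operation sending a source series to the coefficient sequence of its antiderivative is $\mathbb{Q}$-linear. Concretely, the assignment
$$
\sum_{1 \leq n}\alpha(n)z^{n} \;\longmapsto\; q, \qquad \text{where } df=\omega_{i}\sum_{1\leq n}\alpha(n)z^{n} \text{ and } f=\sum_{1\leq n}q(n)z^{n},
$$
is well defined because requiring $f$ to have no constant term pins down the formal antiderivative uniquely, and differentiation, multiplication by the fixed form $\omega_{i}$, and formal integration are all $\mathbb{Q}$-linear. Hence it suffices to determine the output of this assignment on each basis element of $\pazocal{S}_{w}$ separately and then recombine.

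First I would use the definition of $\pazocal{S}_{w}$ to write $\alpha=\sum_{j}c_{j}\,\gamma(\underline{s}^{(j)};\underline{m}^{(j)})$ as a finite $\mathbb{Q}$-linear combination with $w(\underline{s}^{(j)})=w$ for every $j$. For a single summand $\gamma(\underline{s}^{(j)};\underline{m}^{(j)})$ the preceding Lemma computes the antiderivative coefficient explicitly in each of the three admissible cases. The essential bookkeeping is that every such output is a $\mathbb{Q}$-linear combination of functions $\gamma(\underline{t};\underline{m}')$ of weight exactly $w+1$: for $\omega_{0}$ one obtains $\gamma((s_{1},\cdots,s_{k}+1);\underline{m})$, whose weight is $w(\underline{s})+1$; for $\omega_{1}$ one obtains $-\sum_{0\leq i\leq p-1}\gamma(\underline{s},1;\underline{m},i)$, each summand having weight $w(\underline{s})+1$; and for $\omega_{p}$ one obtains $-p\,\gamma(\underline{s},1;\underline{m},m_{k})$, again of weight $w(\underline{s})+1$. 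In all three cases the single extra index raises the weight by precisely one while keeping the result inside the span of the $\gamma$'s.

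Finally, summing over $j$ with the scalars $c_{j}$ and invoking the linearity of the first step, $q$ is exhibited as a finite $\mathbb{Q}$-linear combination of functions $\gamma(\underline{t};\underline{m}')$ with $w(\underline{t})=w+1$, so $q\in\pazocal{S}_{w+1}$. I do not expect a genuine obstacle here, since the entire computational content lives in the preceding Lemma; the only things to verify are the transparent weight count in each of the three cases and the linearity of the antiderivative map. The one point worth making explicit is that the antiderivative is normalized to have vanishing constant term, which is exactly what the constraint $f=\sum_{1\leq n}q(n)z^{n}$ enforces and what makes the map single-valued.
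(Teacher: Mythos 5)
Your argument is correct and is exactly the route the paper intends: the corollary is stated with no separate proof precisely because it follows from the preceding Lemma by $\mathbb{Q}$-linearity of the antiderivative map together with the observation that each of the three cases appends total weight one, landing in the span defining $\pazocal{S}_{w+1}$. Nothing further is needed.
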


\begin{remark}\label{remdiff}
If $g(z)=\sum _{1 \leq n} k(n)z^{n},$ with $k \in \mathcal{F}_{\sigma}(\frac{1}{\iota}),$ then we let $\mathfrak{r}(g)(z):=\sum _{1 \leq n} \mathfrak{r}(k)(n)z^{n}.$ Similarly if $k \in \mathcal{F}(\frac{1}{\iota}),$ then we let $\mathfrak{s}(g)(z):=\sum _{1 \leq n} \mathfrak{s}(k)(n)z^{n}.$ Clearly, we have $\mathfrak{r}(g')=\mathfrak{r}(g)'.$ On the  other hand, in general $\mathfrak{s}(g')\neq \mathfrak{s}(g)'. $ For example for $g(z)=\sum_{1 \leq n} \frac{z^n}{n}, $ the left hand side is $\sum_{0 \leq n} z^n,$ on the other hand the right hand side is   $\sum _{0 \leq n \atop{ p \nmid (n+1)}}z^{n}.$ However, if $\mathfrak{s}(g')=f'$ for some $f=\sum_{1 \leq n}t(n)z^{n},$ with $t \in \mathcal{F},$ then $\mathfrak{s}(g')= \mathfrak{s}(g)'. $ This follows  from the basic observation that if $k(z)$ is a Laurent series such that $\mathfrak{s}(zk(z))=zt(z)$ for some power series $t(z),$ then the coefficient of $1/z$ in  $k(z)$ is 0 and hence $\mathfrak{s}(zk(z))=z\mathfrak{s}(k(z)).$
\end{remark}

\begin{proposition}\label{stildeprop}
Suppose that  $f(z)=\sum _{1 \leq n}q(n) z^{n},$ such that 
\begin{eqnarray}\label{indeqn}
df=\omega_{0} \sum_{1 \leq n} \alpha(n)z^{n}+\omega_{1} \sum_{1 \leq n} \beta(n)z^{n}+\omega_{p} \sum_{1\leq n} \gamma(n)z^{n},
\end{eqnarray} with $\alpha,\beta,\gamma \in \sum_{a+b=w} \pazocal{P}_{a}\cdot \tilde{\pazocal{S}}_{b}.$  If $\lim_{n \to 0}q(n)$ exists then $q \in \sum_{a+b=w+1} \pazocal{P}_{a}\cdot \tilde{\pazocal{S}}_{b}.$ 
\end{proposition}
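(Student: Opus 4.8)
The plan is to lift the regularized equation to an un-regularized one, solve the latter with the antiderivative lemma and Corollary \ref{corinddif}, and then descend through the two projections $\mathfrak{r}$ and $\mathfrak{s}$, using the hypothesis $\lim_{n\to 0}q(n)$ exists precisely at the $\mathfrak{s}$-step. First I would use $\mathbb{Q}$-linearity to write each of $\alpha,\beta,\gamma$ as a finite sum $\sum_j c_j\,\tilde{\gamma}(\underline{t}_j;\underline{m}_j)$ with $c_j\in\pazocal{P}_{a_j}$ a fixed scalar and $w(\underline{t}_j)=b_j$, $a_j+b_j=w$. Since $\tilde{\gamma}(\underline{t};\underline{m})=\mathfrak{s}\circ\mathfrak{r}(\gamma(\underline{t};\underline{m}))$ and the constants $c_j$ pass unchanged through both $\mathfrak{r}$ and $\mathfrak{s}$, each of $\alpha,\beta,\gamma$ equals $\mathfrak{s}\circ\mathfrak{r}$ applied to its un-regularized lift $\hat{\alpha},\hat{\beta},\hat{\gamma}\in\sum_{a+b=w}\pazocal{P}_a\cdot\pazocal{S}_b$, obtained by restoring each $\tilde{\gamma}$ to the corresponding $\gamma$.

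Next I would solve the lifted equation $d\hat{f}=\omega_0\sum\hat{\alpha}(n)z^n+\omega_1\sum\hat{\beta}(n)z^n+\omega_p\sum\hat{\gamma}(n)z^n$ for $\hat{f}=\sum_{n\geq 1}\hat{q}(n)z^n$. Pulling the constants $c_j$ out of the antiderivative and applying the preceding lemma computing the antiderivatives of $\gamma(\underline{s};\underline{m})$ against $\omega_0,\omega_1,\omega_p$ term by term (equivalently, Corollary \ref{corinddif}), each form raises the weight by one and carries $\pazocal{S}_b$ into $\pazocal{S}_{b+1}$, so $\hat{q}\in\sum_{a+b=w+1}\pazocal{P}_a\cdot\pazocal{S}_b$. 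The whole proposition then reduces to the single identification $q=\mathfrak{s}\circ\mathfrak{r}(\hat{q})$, because the $c_j$ pass through and $\mathfrak{s}\mathfrak{r}(\gamma)=\tilde{\gamma}$, so $\mathfrak{s}\mathfrak{r}(\hat{q})\in\sum_{a+b=w+1}\pazocal{P}_a\cdot\tilde{\pazocal{S}}_b$ is exactly the desired membership.

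To prove $q=\mathfrak{s}\mathfrak{r}(\hat{q})$ I would first set $F:=\mathfrak{r}(\hat{f})=\sum\mathfrak{r}(\hat{q})(n)z^n$. Since $\mathfrak{r}$ is computed basis-coefficientwise and commutes with $d/dz$ by Remark \ref{remdiff}, and the lemma's antiderivative formulas are $\mathbb{Q}$-linear in the $\gamma$-symbols with coefficients commuting with $\mathfrak{r}$, the series $F$ satisfies the $\mathfrak{r}$-image equation $dF=\omega_0\sum\mathfrak{r}(\hat{\alpha})z^n+\omega_1\sum\mathfrak{r}(\hat{\beta})z^n+\omega_p\sum\mathfrak{r}(\hat{\gamma})z^n$. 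Applying $\mathfrak{s}$ coefficientwise to these three series returns $\alpha,\beta,\gamma$, which are exactly the data defining $f$; thus $f$ and $\mathfrak{s}(F)$ are governed by the same differential equation, and it suffices to prove $\mathfrak{s}(F)=f$.

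The hard part will be this last identification, and it is where the hypothesis is indispensable: $\mathfrak{s}$ does not commute with $d/dz$ in general (Remark \ref{remdiff}), since the forms $\omega_1,\omega_p$ together with the factor $\iota$ produced by $\omega_0$ create principal parts on the component $p\mathbb{N}$ that $\mathfrak{s}$ would otherwise mishandle. The assumption that $\lim_{n\to 0}q(n)$ exists means $q\in\mathcal{F}$, so $f$ has coefficients in $\mathcal{F}$, and this is precisely the situation covered by the conditional part of Remark \ref{remdiff}. I would establish $\mathfrak{s}(F')=f'$ by a direct computation with the forms, the only delicate point being the $p\mathbb{N}$ component; the regularity of $q$ forces the relevant coefficient of $1/\iota$ to vanish, which by the basic observation in Remark \ref{remdiff} upgrades $\mathfrak{s}(F')=f'$ to $\mathfrak{s}(F)'=f'$. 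As both $f$ and $\mathfrak{s}(F)$ vanish at $z=0$, integrating yields $\mathfrak{s}(F)=f$, hence $q=\mathfrak{s}\mathfrak{r}(\hat{q})$ and the proposition follows.
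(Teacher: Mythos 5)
Your proposal is correct and follows essentially the same route as the paper: produce an unregularized solution via Corollary \ref{corinddif} so that $\mathfrak{s}\circ\mathfrak{r}(F')=f'$, use that $\mathfrak{r}$ commutes with $d/dz$, and invoke the limit hypothesis to get $q\in\mathcal{F}$ so that Remark \ref{remdiff} lets you commute $\mathfrak{s}$ with the derivative and conclude $f=\mathfrak{s}\circ\mathfrak{r}(F)$. The only point the paper makes more explicit is the derivation of $q\in\mathcal{F}$ from the recursion $nq(n)=\alpha(n)-\sum_{k<n}\beta(k)-p\sum_{k<n,\,p\mid(n-k)}\gamma(k)$ together with Proposition \ref{basic}, which shows $nq(n)$ is a power series function whose constant term on $p\mathbb{N}$ must vanish by the limit assumption.
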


\begin{proof}
We have
$$
nq(n)=\alpha(n)-\sum_{1 \leq k<n}\beta(k)-p \sum_{1\leq k <n \atop {p|(n-k)}}\gamma(k).
$$
By Proposition \ref{basic}, the function $nq(n)$ is a power series function and hence so is the function $q(n)$ when restricted to $\mathbb{N}\setminus p\mathbb{N}.$ Let $r(z):=\sum _{0\leq i}b_i z^{i}$ be the power series such that $nq(n)=r(n)$ for all $p|n.$ The assumption on the limit implies that $\lim_{n \to 0} \frac{b_{0}}{n}$ exists. Hence $b_{0}=0$ and $q \in \mathcal{F}.$

By Corollary \ref{corinddif}, there is $F(z)=\sum _{1 \leq n} s(n)z^{n}$ such that $s \in \sum _{a+b=w+1}\pazocal{P}_{a} \cdot \pazocal{S}_{b}$ and $\mathfrak{s}\circ \mathfrak{r}(F')dz$ is equal to the right hand side of (\ref{indeqn}), and hence $\mathfrak{s}\circ \mathfrak{r}(F')=f'.$   Let $g=\mathfrak{r}(F).$ Then  
$\mathfrak{s}(g')=\mathfrak{s}(\mathfrak{r}(F)')=\mathfrak{s}(\mathfrak{r}(F'))=f'.$ Since  we proved above that $q \in \mathcal{F},$ we  conclude by Remark \ref{remdiff} that $\mathfrak{s}(g')=\mathfrak{s}(g)'.$   Therefore $\mathfrak{s}(g)'=f',$ and $f=\mathfrak{s}(g)=\mathfrak{s}\circ \mathfrak{r}(F).$ Hence $q$ is of the form as stated above. 
\end{proof}

\begin{proposition}\label{propalg} 
$\pazocal{P}$ is a $\mathbb{Q}$-algebra. 
\end{proposition}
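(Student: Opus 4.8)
The plan is to prove that $\pazocal{P}$ is closed under multiplication by exhibiting the product of two spanning elements $\underline{\sigma}(\underline{s};\underline{m})$ and $\underline{\sigma}(\underline{s}';\underline{m}')$ as a $\mathbb{Q}$-linear combination of elements of the same type. The natural tool is the shuffle product: the series $\sigma(\underline{s};\underline{m})(n)$ and $\sigma(\underline{s}';\underline{m}')(n)$ are iterated sums over the same range $0<n_1<\cdots<n$, so their pointwise product $\sigma(\underline{s};\underline{m})(n)\cdot\sigma(\underline{s}';\underline{m}')(n)$ expands, by partitioning according to the relative order of the two sets of summation indices, into a $\mathbb{Z}$-linear combination of iterated sums $\sigma(\underline{s}'';\underline{m}'')(n)$ whose weight is $w(\underline{s})+w(\underline{s}')$. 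The subtlety is that when two indices coincide (an index appears in both tuples) one produces a sum of the form $\sum 1/n_i^{s_i+s_i'}$ subject to a single congruence condition $p\mid(n_i-m_i)$ forcing $m_i=m_i'$; these diagonal terms are again of the type $\sigma(\underline{s}'';\underline{m}'')$, so the whole product lives in $\mathcal{F}_\sigma$.

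First I would make precise that multiplication in $\pazocal{P}$ is the pointwise multiplication of the regularized functions followed by evaluation at $0$, i.e. $\underline{\sigma}(\underline{s};\underline{m})\cdot\underline{\sigma}(\underline{s}';\underline{m}')=\lim_{n\to 0}\bigl(\tilde\sigma(\underline{s};\underline{m})\tilde\sigma(\underline{s}';\underline{m}')\bigr)(n)$. The key observation is that regularization $\mathfrak{r}\colon\mathcal{F}_\sigma\to\mathcal{F}$ is an algebra homomorphism with respect to the $\mathcal{F}$-module structure, because $\mathcal{F}_\sigma$ is free over $\mathcal{F}$ on the basis $\{\sigma_p(\underline{t})\}$ (Corollary \ref{freecorollary}) and $\mathfrak{r}$ is exactly the projection onto the $\sigma_p(\emptyset)=1$ coefficient. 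Since the product of two elements of $\mathcal{F}_\sigma$ again lies in $\mathcal{F}_\sigma$ by the shuffle formula, writing both factors in the $\sigma_p$-basis and extracting the constant-coefficient component shows that the regularization of a product decomposes compatibly. Concretely, the shuffle expansion gives $\sigma(\underline{s};\underline{m})\cdot\sigma(\underline{s}';\underline{m}')=\sum_{\underline{s}'',\underline{m}''}c\,\sigma(\underline{s}'';\underline{m}'')$ with weights adding, hence $\tilde\sigma(\underline{s};\underline{m})\cdot\tilde\sigma(\underline{s}';\underline{m}')$ regularizes to the corresponding combination of $\tilde\sigma(\underline{s}'';\underline{m}'')$, and taking $\lim_{n\to 0}$ yields a $\mathbb{Q}$-linear combination of the $\underline{\sigma}(\underline{s}'';\underline{m}'')$ with $w(\underline{s}'')=w+w'$. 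This places the product in $\sum_{w''}\pazocal{P}_{w''}=\pazocal{P}$.

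The main obstacle I anticipate is verifying that regularization commutes with taking products in the required sense, i.e. that $\mathfrak{r}(FG)$ can be computed from $\mathfrak{r}(F)$, $\mathfrak{r}(G)$ together with the shuffle structure, rather than being some uncontrolled projection. The freeness in Corollary \ref{freecorollary} is what makes this work: once both factors are written in the $\sigma_p$-basis, the product is governed by the shuffle algebra structure on the $\sigma_p(\underline{t})$'s, and the $\mathfrak{r}$-component of a product of basis elements $\sigma_p(\underline{t})\cdot\sigma_p(\underline{t}')$ is simply the coefficient of $1$ in its shuffle expansion, which is $0$ unless both $\underline{t},\underline{t}'$ are empty. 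Thus cross terms between nontrivial basis elements do not contribute to the regularization except through the genuinely diagonal coincidences already accounted for in the shuffle, and the bookkeeping that weights add under the shuffle guarantees the grading claim. The only remaining care is the interchange of the limit $\lim_{n\to 0}$ with the finite linear combination, which is immediate since each $\tilde\sigma(\underline{s}'';\underline{m}'')$ is a power series function and evaluation at $0$ is $\mathbb{Q}$-linear.
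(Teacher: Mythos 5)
Your proof is correct and follows the same route as the paper, which disposes of the proposition in one line by citing the shuffle (quasi-shuffle) product formula to get $\pazocal{P}_{a}\cdot\pazocal{P}_{b}\subseteq\pazocal{P}_{a+b}$. Your elaboration---that $\mathfrak{r}$ kills all cross terms because a product of nonempty basis elements $\sigma_{p}(\underline{t})\cdot\sigma_{p}(\underline{t}')$ has no $\sigma_{p}(\emptyset)$-component, so regularization is multiplicative and commutes with the evaluation at $0$---is exactly the detail the paper leaves implicit.
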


\begin{proof}
The statement follows from the shuffle product formula since this implies that $\pazocal{P}_{a} \cdot \pazocal{P}_{b} \subseteq \pazocal{P}_{a+b}.$
\end{proof}

\section{p-adic multi-zeta values}

In this section we follow the notation of \cite{U1}, except that we denote $g(z)$ by $\mathfrak{g}(z).$  Then letting $h:=\pazocal{F}_{*}(_{t_{\infty 0}}e_{t_{01}}),$ we have $h=\mathfrak{g}(\infty).$  The fundamental equation \cite[(2) p.135]{U1} that connects $g$ and $h$ takes the form:
\begin{eqnarray}\label{fundid}
(e_{0}+e_{1})h=h(e_{0}+g^{-1}e_{1}g)
\end{eqnarray} 
and the fundamental differential equation \cite[(1) p.133]{U1} takes the form:

\begin{eqnarray}\label{funddiff}
d \mathfrak{g}=p(e_0 \mathfrak{g} -\mathfrak{g}e_{0})\cdot\omega_{0}+e_{1} \mathfrak{g} \cdot \pazocal{F}^{*}\omega_{1}-p\mathfrak{g} (g^{-1}e_{1}g)\cdot \omega_{1}. 
\end{eqnarray}

For every $e^{I},$ let $\mathfrak{g}[e^{I}]$ denote the coefficient of $e^I$ in $\mathfrak{g}$ and $\mathfrak{g}\{e^{I}\}$ denote the function that sends $n$ to the coefficient of $z^{n}$ in $\mathfrak{g}[e^{I}].$ If $I=e_{0} ^{i_{1}}e_{1} ^{j_{1}} \cdots e_{0} ^{i_{k}}e_{1} ^{j_{k}},$ let $d(I):=|\{j_{t} | j_{t} \neq 0, \; 1 \leq t \leq k\}|$ and $w(I):=\sum (i_{t}+j_{t}).$  

\begin{theorem}\label{mainthm}
 For each $I,$ $\mathfrak{g}\{e^{I}\} \in \sum _{a+b=w(I)} \pazocal{P}_{a} \cdot \tilde{\pazocal{S}}_{b}$ and  $g[e^I]\in \pazocal{P}_{w(I)}.$    
\end{theorem}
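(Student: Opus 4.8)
The plan is to prove both assertions together by induction on the weight $w(I)$. The differential equation (\ref{funddiff}) will drive the statement for $\mathfrak{g}\{e^I\}$, while the functional equation (\ref{fundid}) together with the value of $\mathfrak{g}$ at infinity will drive the statement for $g[e^I]$. The one preliminary simplification I would record first is that, for the standard Frobenius lift $z\mapsto z^p$, one has $\pazocal{F}^*\omega_1=\pazocal{F}^*d\log(z-1)=d\log(z^p-1)=\omega_p$. Substituting this into (\ref{funddiff}) rewrites the right-hand side purely in terms of the three forms $\omega_0,\omega_1,\omega_p$, which is exactly the shape to which Proposition \ref{stildeprop} applies.

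For the inductive step on $\mathfrak{g}\{e^I\}$, I would extract the coefficient of a fixed word $e^I$ from (\ref{funddiff}). Writing $g^{-1}e_1g=\sum_J c_Je^J$, this produces a differential equation for $\mathfrak{g}[e^I]$ of the form (\ref{indeqn}), with $\omega_0$-coefficient $\alpha=p(\mathfrak{g}\{e^{I'}\}-\mathfrak{g}\{e^{I''}\})$ coming from $p(e_0\mathfrak{g}-\mathfrak{g}e_0)$ (where $I=e_0I'$, resp. $I=I''e_0$, when applicable), $\omega_p$-coefficient $\gamma=\mathfrak{g}\{e^{I'}\}$ coming from $e_1\mathfrak{g}$ (where $I=e_1I'$), and $\omega_1$-coefficient $\beta=-p\sum_{e^Ke^J=e^I}\mathfrak{g}\{e^K\}\,c_J$. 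In $\alpha$ and $\gamma$ the words $I',I''$ have weight $w(I)-1$, so the inductive hypothesis puts these in $\sum_{a+b=w(I)-1}\pazocal{P}_a\cdot\tilde{\pazocal{S}}_b$. In $\beta$ the weights satisfy $w(K)+w(J)=w(I)$ with $w(J)\ge1$ (as $g^{-1}e_1g$ has no constant term), so $w(K)<w(I)$; the coefficients $c_J$ are weight-homogeneous polynomials in the $g[e^L]$ and hence, by the $g$-part of the inductive hypothesis, lie in $\pazocal{P}_{w(J)-1}$, whence each product $\mathfrak{g}\{e^K\}\,c_J$ lies in $\sum_{a+b=w(I)-1}\pazocal{P}_a\cdot\tilde{\pazocal{S}}_b$ because $\pazocal{P}$ is an algebra (Proposition \ref{propalg}) and the spaces $\pazocal{P}_a\cdot\tilde{\pazocal{S}}_b$ are modules over it. Thus $\alpha,\beta,\gamma\in\sum_{a+b=w(I)-1}\pazocal{P}_a\cdot\tilde{\pazocal{S}}_b$, and Proposition \ref{stildeprop} will give $\mathfrak{g}\{e^I\}\in\sum_{a+b=w(I)}\pazocal{P}_a\cdot\tilde{\pazocal{S}}_b$ once its hypothesis is checked.

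That hypothesis --- the existence of $\lim_{n\to0}\mathfrak{g}\{e^I\}(n)$ --- is where I expect the real difficulty. The recursion already shows $n\,\mathfrak{g}\{e^I\}(n)$ is a power series function, so the only obstruction to the limit is a simple pole $b_0/n$ on the component $p\mathbb{N}$, and the limit exists precisely when the residue $b_0$ vanishes. To force $b_0=0$ I would invoke the value of $\mathfrak{g}$ at infinity, established in \cite[Proposition 2]{U1}: the coefficients $h[e^I]=\mathfrak{g}(\infty)[e^I]$ are well-defined, and their finiteness constrains $\mathfrak{g}\{e^I\}$ enough to kill the residue, since a nonzero $b_0$ would manifest as a genuine non-regularizable (logarithmic) contribution. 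Packaging the residue $b_0$ produced by the recursion against this finiteness --- conveniently through the identification of the Taylor/residue data of regularized series with the $\underline{\sigma}$- and $\underline{\gamma}$-values in Remark \ref{derivativeofsigma} --- so as to conclude $b_0=0$ is the main obstacle of the proof.

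Finally, for $g[e^I]\in\pazocal{P}_{w(I)}$, I would combine the first assertion with \cite[Proposition 2]{U1} to identify $h[e^J]=\mathfrak{g}(\infty)[e^J]$ as a regularized iterated-sum value, so that $h[e^J]\in\pazocal{P}_{w(J)}$. Rearranging (\ref{fundid}) gives $g^{-1}e_1g=h^{-1}\big((e_0+e_1)h-he_0\big)=h^{-1}[e_0,h]+h^{-1}e_1h$, expressing the coefficients of $g^{-1}e_1g$, and hence --- inverting the group-like $g$ as in \cite{U1} --- the coefficients $g[e^I]$ themselves, as weight-homogeneous polynomials in the $h[e^J]$ with $w(J)\le w(I)$. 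Since $\pazocal{P}$ is a graded algebra with $\pazocal{P}_a\cdot\pazocal{P}_b\subseteq\pazocal{P}_{a+b}$ (Proposition \ref{propalg}), the weight bookkeeping yields $g[e^I]\in\pazocal{P}_{w(I)}$, closing the simultaneous induction.
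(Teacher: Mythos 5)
Your proposal is essentially correct, and it rests on the same two pillars as the paper --- extracting coefficients from (\ref{funddiff}) and feeding them to Proposition \ref{stildeprop}, then using (\ref{fundid}) together with $h=\mathfrak{g}(\infty)$ to control $g$ --- but it organizes the induction differently: you induct on the weight $w(I)$, whereas the paper inducts on the depth $d(I)$ (the number of $e_1$-blocks), with a sub-induction on $s_{k+1}$ and an auxiliary hypothesis (ii) for the words $e_1e_0^{s_k}e_1\cdots e_0^{s_1}e_1$. The difference matters at one precise point: in the $\omega_1$-coefficient $\beta$, the term $(g^{-1}e_1g)[e^J]$ involves $g^{\pm}[e^{J_i}]$ whose depth can equal that of $I$ itself, so under the depth induction these are \emph{not} covered by the hypothesis, and the paper must prove Lemma \ref{keylemma} (that the combination $g^{-1}[e_1e_0^{s_{k+1}}\cdots e_0^{s_1}]+g[e_0^{s_{k+1}}e_1\cdots e_0^{s_1}e_1]$ lies in $\pazocal{P}_w$ even though the summands are not yet known to) in order to close the $\mathfrak{g}$-step. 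Under your weight induction the $J_i$ have weight $\le w(I)-1$ and are covered outright, so this difficulty evaporates from the $\mathfrak{g}$-step --- a genuine simplification. The cost is shifted to your last paragraph: recovering $g$ from $g^{-1}e_1g=h^{-1}[e_0,h]+h^{-1}e_1h$ is exactly where the paper spends Lemmas \ref{keylemma}--\ref{h2} and the two closing lemmas, and it is not pure bookkeeping: the recursion $g\cdot(g^{-1}e_1g)=e_1g$ only determines $g[e^V]$ for $V$ ending in $e_1$ (and, after a further reduction, beginning with $e_0$), and one must invoke the normalizations $g[e_0]=g[e_1]=0$ and the shuffle/group-like regularization (the analogue of Lemma \ref{h1}) to reach the remaining words without losing the weight grading. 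You should spell that out; it is the real combinatorial content of the second half of the proof. Finally, the step you flag as ``the main obstacle'' --- the existence of $\lim_{n\to 0}\mathfrak{g}\{e^I\}(n)$ --- is less of one than you fear: it is precisely the hypothesis of Proposition \ref{stildeprop}, and the paper supplies it in one line by citing \cite[Proposition 2]{U1} (convergence of $\mathfrak{g}$ at $\infty$); the vanishing of the residue $b_0$ is then already carried out inside the proof of Proposition \ref{stildeprop}, so no further packaging against Remark \ref{derivativeofsigma} is needed.
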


  \begin{proof}

We will prove the statement by induction on $d(I).$   

\begin{lemma}
The statement above is true for $\mathfrak{g}\{ e^{I}\},$ with $d(I) \leq 1;$ for  $\mathfrak{g}\{ e_{1}e_{0} ^{s}e_{1}\},$ with any $s \geq 0;$ and for $g[e^I],$ with $d(I) \leq 1.$
\end{lemma}

\begin{proof}
We proved on  \cite[p. 138]{U1} that $\mathfrak{g}\{ e_{0} ^{s-1} e_{1} \}(n)= \frac{p^s}{n^{s}},$ if $p \nmid n$ and 0 otherwise. Hence $\mathfrak{g}\{ e_{0} ^{s-1} e_{1} \}=p^s\sum _{1\leq i <p}\gamma(s;i) \in \tilde{S}_{s},$ since $\gamma(s;i)=\tilde{\gamma}(s;i), $ for $i \neq 0.$   
The statement for $\mathfrak{g}\{ e^{I}\},$ with $d(I) \leq 1,$ then follows from the fact that $\mathfrak{g}$ is group-like. 

Similarly, we proved on \cite[p. 139]{U1} that $\mathfrak{g}\{e_{1}e_{0}^{s-1}e_{1}\}=$
$$
p^{s+1}((-1)^{s+1}(\sum_{0 \leq i, j <p \atop {i \neq 0}} \gamma(s,1;i,j) ) - \sum _{0<i<p \atop {i \neq 0}} \gamma(s,1;i,i)). 
$$
Clearly, when $i,j \neq 0,$ $\gamma(a,b;i,j) =\tilde{\gamma}(a,b;i,j).$ Note that  for $i\neq 0,$ $\gamma(s,1;i,0)(n)=n^{-1}\sigma(s;i)(n) ,$ if $p|n,$ and is 0 otherwise. Note that $\sigma(s;i)$ is a power series function such that $\lim_{n\to 0}\sigma(s;i)(n)=0$ \cite[p. 139]{U1}.  This implies that,  for $p|n,$ $\sigma(s;i)(n)=\sum _{1 \leq i} a_{i}n^{i},$ for some $a_{i} \in \mathbb{Q}_{p}.$ Therefore $\gamma(s,1;i,0)=\tilde{\gamma}(s,1;i,0).$ Combining these, we deduce that $\mathfrak{g}\{e_{1}e_{0}^{s-1}e_{1}\} \in \tilde{\pazocal{S}}_{s+1}.$ 

Finally, on \cite[p. 140]{U1}, we proved that $g[e_{1}]=0$ and  
$$
g[e_{0}^{s-1}e_{1}]=\frac{p^s}{s-1}\sum_{0<i<p}\sigma(s-1;i)^{(1)}(0),
$$ for $s\geq 1.$ Since $\sigma(s-1;i)=\tilde{\sigma}(s-1;i),$ the claim follows from  Remark \ref{derivativeofsigma}. To deduce the statement for all $I$ with $d(I) \leq 1,$ we use the fact that $g$ is group-like \cite{U1}.
 \end{proof}

 We will prove the result in several steps. Assume  that we know the statement for:

(i) $\mathfrak{g}\{ e^{I}\},$ with $d(I) \leq k;$ \hfill 

(ii) $\mathfrak{g}\{e_{1}e_{0} ^{s_{k}}e_{1} \cdots e_{0} ^{s_{1}}e_1 \},$ for all $0 \leq s_{i};$ and \hfill

(iii) $g[e^I],$ with 
 $d(I) \leq k.$

\begin{lemma}\label{glemmaind}
For $d(I) \leq k+1, $ $\mathfrak{g}\{ e^{I}\} \in  \sum _{a+b=w(I)} \pazocal{P}_{a} \cdot \tilde{\pazocal{S}}_{b}.$
\end{lemma}

\begin{proof}
By (ii), we know the statement for $I=e_{1} e_{0} ^{s_{k}}e_{1} \cdots e_{0} ^{s_{1}}e_{1}.$ It suffices to prove the statement for $e_{0} ^{s_{k+1}}e_{1} e_{0} ^{s_{k}}e_{1} \cdots e_{0} ^{s_{1}}e_{1},$ and we will do this by induction on $s_{k+1}.$ Assume that  $\mathfrak{g}\{e_{0} ^{m}e_{1} e_{0} ^{s_{k}}e_{1} \cdots e_{0} ^{s_{1}}e_{1} \} \in  \sum _{a+b=w} \pazocal{P}_{a} \cdot \tilde{\pazocal{S}}_{b}.$ Comparing the coefficient of $e_{0} ^{m+1}e_{1} e_{0} ^{s_{k}}e_{1} \cdots e_{0} ^{s_{1}}e_{1} $ on both sides of (\ref{funddiff}), and using the inductive hypothesis, together with (i), (iii) and Proposition \ref{propalg}, we see that $d\mathfrak{g}[e_{0} ^{m+1}e_{1} e_{0} ^{s_{k}}e_{1} \cdots e_{0} ^{s_{1}}e_{1}]$ is of the form as in the statement of Proposition \ref{stildeprop}. Furthermore, we note that 
$\lim_{n \to 0}\mathfrak{g}\{e_{0} ^{m+1}e_{1} e_{0} ^{s_{k}}e_{1} \cdots e_{0} ^{s_{1}}e_{1}\}(n) $ exists \cite[Proposition 2]{U1}. Therefore we can apply Proposition \ref{stildeprop} to finish the proof. 
\end{proof}

\begin{proposition}\label{proph}
 If  $d(I) \leq k+1$ then    $h[ e^{I}] \in \pazocal{P}_{w(I)}.$ 
\end{proposition}

\begin{proof} 
Note that $h[e^I]=\mathfrak{g}[e^{I}](\infty)=\lim_{n \to 0}\mathfrak{g}\{ e^{I}\}(n).$ The statement then follows from Lemma \ref{glemmaind}, Remark \ref{derivativeofsigma}, and Proposition \ref{propalg}. 
\end{proof}
The following simple lemma is crucial in what follows. 

\begin{lemma}\label{keylemma}
For $s_{i} \geq 0,$
$g^{-1}[e_{1}e_{0} ^{s_{k+1}}  \cdots e_1e_{0} ^{s_1}]+ g[e_{0} ^{s_{k+1}} e_{1}\cdots e_{0} ^{s_1}e_1] \in \pazocal{P}_{w},$ where $w=w(e_{1}e_{0} ^{s_{k+1}}  \cdots e_1e_{0} ^{s_1})$ . 
\end{lemma}

\begin{proof}
Let us compare the coefficient of $e_{1}e_{0} ^{s_{k+1}} \cdots e_1e_{0} ^{s_1}e_{1}$ on both sides of (\ref{fundid}). 

The left hand side is $h[e_{0} ^{s_{k}+1} \cdots e_1e_{0} ^{s_1}e_{1}],$ which is in $\pazocal{P}_{w}$  by the Proposition \ref{proph}. The right hand side is $(g^{-1}e_{1}g)[e_{1}e_{0} ^{s_{k+1}} \cdots e_1e_{0} ^{s_1}e_{1}], $ which is a sum of the expression in the statement of the lemma and sums of products of the form $g^{-1}[e^{I_{1}}]g[e^{I_{2}}]$ with $d(I_{j}) \leq k,$ and $w(I_{1})+w(I_{2})=w.$   The statement then follows from (iii) and Propostition \ref{propalg}.
\end{proof}

\begin{lemma}\label{gh}
 For $s_{i} \geq 0,$
$h[e_{0} ^{s_{k+1}} e_{1}e_{0} ^{s_{k}}e_{1} \cdots e_{0} ^{s_1}e_{1} ^{2}]+g[e_{0} ^{s_{k+1}+1} e_{1}\cdots e_{0} ^{s_1}e_{1} ] \in \pazocal{P}_{w+1},$ with $w$ as above.
\end{lemma}

\begin{proof}
Let us look at the coefficient of $e_{0} ^{s_{k+1}+1} e_{1}e_{0} ^{s_{k}}e_{1} \cdots e_{0} ^{s_1}e_{1} ^{2}$ with $s_{i} \geq 0$ in (\ref{fundid}). The induction hypothesis, the  fact that $h[e_0]=0,$ and Proposition \ref{proph},   imply that 
$h[e_{0} ^{s_{k+1}} e_{1}e_{0} ^{s_{k}}e_{1} \cdots e_{0} ^{s_1}e_{1} ^{2}]=g^{-1}[e_{0} ^{s_{k+1}+1} e_{1}e_{0} ^{s_{k}}e_{1} \cdots e_{0} ^{s_1}e_{1} ]+({\rm terms \; in \; } \pazocal{P}_{w+1}).$ Then using Lemma \ref{keylemma} we have the statement. 
\end{proof}

\begin{lemma}\label{h1}
For every $s_{i} \geq 0$ there exist $t(\alpha_{k+1},\cdots, \alpha_{1} ) \in \mathbb{Q},$ such that 
$$
h[e_{0} ^{s_{k+2}}e_{1} \cdots e_{0} ^{s_{1}}e_{1}
]-\sum t(\alpha_{k+1},\cdots, \alpha_{1} )h[e_{1}e_{0} ^{\alpha_{k+1}} e_1\cdots e_{0} ^{\alpha_{1}}e_{1}]
$$
is in $\pazocal{P}_{w},$ where $w=w(e_{0} ^{s_{k+2}}e_{1} \cdots e_{0} ^{s_{1}}e_{1})$ and the sum is over $\underline{\alpha}=(\alpha_{k+1}, \cdots\alpha_{1})$ with $w(\underline{\alpha})=\sum s_{i}.$
\end{lemma}

\begin{proof}
There is nothing to prove if $s_{k+2}=0,$ so we assume that $s_{k+2}>0.$ Looking at the coefficient of $e_{0} ^{s_{k+2}}e_{1} \cdots e_{0} ^{s_{1}}e_{1}e_{0}$ on both sides of (\ref{fundid}), and using the induction hypotheses together with the fact that $h[e_{0}]=g[e_{0}]=0,$ we find that 
$$
h[e_{0} ^{s_{k+2}-1}e_{1} \cdots e_{0} ^{s_{1}}e_{1}e_{0}]-h[e_{0} ^{s_{k+2}}e_{1} \cdots e_{0} ^{s_{1}}e_{1}] \in \pazocal{P}_{w}.
$$
 Next using the shuffle formula for $0=h[e_{0} ^{s_{k+2}-1}e_{1} \cdots e_{0} ^{s_{1}}e_{1}]h[e_{0}],$ and the above fact, we obtain that 
 $$
(s_{k+2}+1) h[e_{0} ^{s_{k+2}}e_{1} \cdots e_{0} ^{s_{1}}e_{1}]+\sum _{1\leq i\leq k+1}(s_{i}+1)h[e_{0} ^{s_{k+2}-1}e_{1} \cdots e_0 ^{s_{i}+1}e_1 \cdots e_{0} ^{s_{1}}e_{1}]
 $$
 is in $\pazocal{P}_{w}.$ From this the assertion follows by induction on $s_{k+2}.$ 
\end{proof}

 \begin{lemma} For $s_{i} \geq 0,$
$\mathfrak{g}\{e_{1}e_{0} ^{s_{k+1}} e_{1}e_{0} ^{s_{k}}e_{1} \cdots e_{0} ^{s_1}e_{1} \} \in \tilde{\pazocal{S}}_{w},$ where we have  $w=w(e_{1}e_{0} ^{s_{k+1}} e_{1}e_{0} ^{s_{k}}e_{1} \cdots e_{0} ^{s_1}e_{1}).$  
\end{lemma}

\begin{proof} 
Let us look at the coefficient of $e_{1}e_{0} ^{s_{k+1}} e_{1}e_{0} ^{s_{k}}e_{1} \cdots e_{0} ^{s_1}e_{1}$ in the differential equation (\ref{funddiff}) which gives 
$d \mathfrak{g}[e_{1}e_{0} ^{s_{k+1}} e_{1} \cdots e_{0} ^{s_1}e_{1}]=$
$$
\mathfrak{g}[e_{0} ^{s_{k+1}} e_{1} \cdots e_{0} ^{s_1}e_{1}]\cdot \pazocal{F}^{*}\omega_{1}-p (\mathfrak{g}(g^{-1}e_{1}g))[e_{1}e_{0} ^{s_{k+1}} e_{1}e_{0} ^{s_{k}}e_{1} \cdots e_{0} ^{s_1}e_{1}]\cdot \omega_{1}.
$$
By the induction all the terms contribute to give a term in the form that we are seeking except possibly the term $-p\omega_{1} (g^{-1}[e_{1}e_{0} ^{s_{k+1}}  \cdots e_1e_{0} ^{s_1}]+ g[e_{0} ^{s_{k+1}} e_{1}\cdots e_{0} ^{s_1}e_1]).$ But this is also in the form that we were looking for by Lemma \ref{keylemma}.
\end{proof}

    \begin{corollary}\label{h2}
     For $s_{i} \geq 0,$
$h[e_{1}e_{0} ^{s_{k+1}} e_{1} \cdots e_{0} ^{s_1}e_{1} ] \in \pazocal{P}_{w},$ where we have $w=w(e_{1}e_{0} ^{s_{k+1}} e_{1} \cdots e_{0} ^{s_1}e_{1}).$   
\end{corollary}
\begin{proof}
Clear using the fact that $h=\mathfrak{g}(\infty).$ 
\end{proof}

\begin{lemma}
For $s_{i} \geq 0,$ $g[e_{0} ^{s_{k+1}+1} e_{1} \cdots e_{0} ^{s_1}e_{1} ] \in \pazocal{P}_{w},$ where we have  $w=w(e_{0} ^{s_{k+1}+1} e_{1} \cdots e_{0} ^{s_1}e_{1}).$
 \end{lemma}
 \begin{proof}
 This follows from combining Lemma \ref{gh}, Lemma \ref{h1}, and Corollary \ref{h2}.
 \end{proof}

\begin{lemma}
We have $g[ e_{1}e_{0} ^{s_k}e_{1}  \cdots e_{0} ^{s_1}e_{1} ] \in \pazocal{P}_{w},$ with $w=w(e_{1}e_{0} ^{s_{k}}  \cdots e_{0} ^{s_1}e_{1}).$
 \end{lemma}
 \begin{proof}
If all $s_{i}=0,$ then the expression is 0. Otherwise let $j:=max\{i|s_{i} \neq 0 \}.$ Then applying Lemma \ref{keylemma} several times we see that it is sufficient to prove that $g[e_0 ^{s_{j}}e_{1}\cdots e_{0} ^{s_{1} } e_{1} ^{k+2-j}] \in \pazocal{P}_{w},$ which we did in the previous proposition.  
 \end{proof}
This finishes the proof of the theorem.
\end{proof} 

Recall that the p-adic multi-zeta values $\zeta_{p}(s_{k},\cdots ,s_{1})$  were defined as
\begin{eqnarray}\label{pzeta}
g[e_{0} ^{s_{k}-1}e_{1}\cdots e_{0} ^{s_{1}-1}e_{1}]=p^{\sum s_{i}}\zeta_{p}(s_{k},\cdots ,s_{1})
\end{eqnarray}
in \cite[Definition 3]{U1}.

Let $\pazocal{Z}$ denote the $\mathbb{Q}$-space generated by the p-adic multi-zeta values. By the shuffle product formula this is an algebra.

\begin{theorem}
We have the inclusion $\pazocal{Z} \subseteq \pazocal{P}.$ 
\end{theorem}

\begin{proof}
This is a consequence of Theorem \ref{mainthm} and (\ref{pzeta}).
\end{proof}

\begin{remark}
 Furusho defined p-adic multiple-zeta values  using  Coleman's theory of iterated p-adic  integrals. Our approach in \cite{U1} and here is based on Deligne's theory of the comparison isomorphism between  the de Rham and the crystalline fundamental group.   However, the $\mathbb{Q}$-space that these two different definitions generate are the same \cite[Theorem 2.8, Examples 2.10]{fu} and hence the p-adic multiple-zeta values as defined by Furusho also lie in $\pazocal{P}.$  \end{remark}

 \end{document}